\documentclass[10pt,a4paper]{article}
\usepackage[utf8]{inputenc}
\usepackage[T1]{fontenc}

\usepackage{amssymb,amsmath,amstext,euscript,tabularx,epsfig,amsthm, amscd, amsfonts, graphicx,float,pgf,tikz,amsfonts}  
\usepackage{here,enumerate,color,indentfirst,fontenc, verbatim}
\usepackage{delarray,multicol,hhline,alltt}
\usepackage{makeidx}
\usepackage{amsmath}
\usepackage{bbold}
\usepackage{mathtools,tikz-cd}
\usepackage{tikz}
\usepackage{amsfonts}
\usepackage{amssymb}
\usepackage{setspace}
\usepackage{graphicx}
\usepackage{color}
\usepackage{hyperref}
\usepackage{fancybox}
\usepackage{latexsym,wasysym,mathrsfs}
\usepackage[square,numbers,sort&compress]{natbib}
\usepackage{lipsum}
\usepackage{fancyhdr}
\usepackage{time}
\usepackage{color}
\usepackage[Glenn]{fncychap}
\usepackage{afterpage}
\usepackage{xcolor}
\usepackage{blindtext}
\author{Chaïmae Mezzat}

\usepackage{authblk}
\textheight =22,5cm
\textwidth =14,5cm


\newtheorem{defi}{Definition}[section]
\newtheorem{ex}{Example}

\newtheorem{rmq}{Remark}[section]
\newtheorem{thm}{Theorem}[section]
\newtheorem{cor}{Corollary}[section]
\newtheorem{pro}{Proposition}[section]

\newtheorem{lem}{Lemma}[section]

\newcommand{\C}{\mathbb{C}}
\newcommand{\R}{\mathbb{R}}

\newcommand{\N}{\mathbb{N}}
\newcommand{\ind}{\mathbb{1}}

\newcommand{\B}{\mathcal{B}}
\newcommand{\A}{\mathcal{A}}
\newcommand{\h}{\mathcal{H}}
\newcommand{\z}{\mathcal{Z}}

\newcommand{\el}{\mathcal{L}}
\newcommand{\cnt}{\mathcal{C}}
\newcommand{\X}{\mathcal{X}}
\newcommand{\ran}{\mathcal{R}}

\newcommand{\keywords}[1]{\textbf{\textit{Keywords---}} #1}

\newcommand{\subjclass}[2][2010]{
  Subject classification\footnotetext{ \emph{2020 Mathematics subject classification.}  #2 }}
\definecolor{Gris}{rgb}{0.45,0.48,0.45}
\textwidth=15cm
\textheight=21cm
\tolerance=9000
\hbadness=9000
\pretolerance=2500
\linespread{1.5}\hoffset=-20mm \voffset=-12mm
\textheight=640pt\textwidth=460pt
\definecolor{lightgray}{gray}{0.9}
\colorlet{myblue}{blue!45!gray}

\begin{document}

 \thispagestyle{plain}
 \begin{titlepage}

  \vspace{2cm}
\hspace{0.5cm}  \begin{minipage}{16cm}

    \vspace{2cm}
    \begin{center}
    \begin{large}
  \Large{ \textbf{\textit{ \hspace{-1cm}$K$-$b$-frames for Hilbert spaces and the $b$-adjoint operator}}}
    \end{large}
    \end{center}
    \begin{center}  
    Chaimae MEZZAT$^{(1)}$, Samir KABBAJ$^{(2)}$ and Abdelkarim BOUROUIHIA$^3$
   \newline $~^{(1)}$ chaimae.mezzat7@gmail.com. Department of Mathematics, University of Ibn Tofail, B.P. 133, Kenitra, Morocco.
  \newline  $~^{(2)}$ samkabbaj@yahoo.fr. Department of Mathematics, University of Ibn Tofail, B.P. 133, Kenitra, Morocco.
\newline $~^{(3)}$ ab1221@nova.edu. Department of Mathematics, Nova Southeastern University, 3301 College Avenue, Fort Lauderdale, Florida, United States of America.
   \end{center}
 \end{minipage}
 
 \begin{minipage}[c]{\linewidth}
 \vspace{1cm}
 \begin{center}
 Laboratory of Partial Differential Equations, Algebra and Spectram Geometry.\\
 Faculty of sciences, Ibn Tofail University\\
 Kénitra, Morocco.
 \end{center}
 \end{minipage}

 \begin{minipage}[c]{\linewidth}
  \vspace{1cm}
  \begin{abstract}
  In this paper, we will generelize $b$-frames; a new concept of frames for Hilbert spaces, by $K$-$b$-frames. The idea is to take a sequence from a Banach space and see how it can be a frame for a Hilbert space. Instead of the scalar product we will use a new product called the $b$-dual product and it is constructed via a bilinear mapping. We will introduce new results about this product, about $b$-frames, and about $K$-$b$-frames, and we will also give some examples of both $b$-frames and $K$-$b$-frames that have never been given before. We will give the expression of the reconstruction formula of the elements of the Hilbert space. We will as well study the stability and preservation of both $b$-frames and $K$-$b$-frames; and to do so, we will give the equivalent of the adjoint operator according to the $b$-dual product.
  \end{abstract}
 \end{minipage}
 
 \begin{minipage}[c]{\linewidth}
 \vspace{1cm}
 \keywords {$b$-frames, $K$-$b$-frames, $b$-frame Operator,$K$-$b$-frame Operator, Banach space, Hilbert space, sesquilinear form.}
 \end{minipage}

 \begin{minipage}[c]{\linewidth}
\vspace{1.5cm}
\subjclass[2022]{46B15, 46A35, 42C15, 47B02, 47A07.}
\end{minipage}
\end{titlepage} 

\newpage
\tableofcontents
 
 \newpage
\begin{center}
\Large{\textbf{Introduction}}
\end{center}
The history of Frames Theory has begun in 1952, when Duffin and Schaeffer introduced it to solve some nonharmonic Fourier series problems \cite{Duffin}. Frames were a key that solved several problems in many fields, such as physics, natural sciences, and engineering. It is applied in signal and image processing and treatment, coding, system modeling and many other interesting fields (see \cite{grossmann}, \cite{wavelets} and \cite{balan})\\
 It is also known that D.Gabor has already introduced "Les Gaborettes" in 1946 which are a particular case of frames, but they were introduced for a physics purpose.\\
 In 1986, I.Daubechies, A.Grossmann, and Y.Meyer revolutionized the Theory of Frames by giving redundancy properties. The interest was to find a reconstruction formula using frames to feed the need of many problems in numerous fields.\\
 In 2012, L.Gavruta introduced a more generale case of frames, which is $K-$Frames and proved its existence in \cite{Gavruta} and was able to extend many existing results to it and gave some new interesting results in \cite{Gavruta2}. For more results about $K$-frames see (\cite{kabros} and \cite{kabbross}).\\
 In order to generalize frames for Hilbert spaces, M.Ismailov, F.Guliyeva, and Y.Nasibov came up with the concept of $b$-frames. It is a concept where we are able to generate Hilbert frames by a bilinear mapping $b: \h_1\times \B \rightarrow \h_2$, such that $\h_1,\h_2$ are two Hilbert spaces and $\B$ is a Banach space, and via this bilinear mapping we construct a new product that we will call the $b-$dual product to define a new frame for a Hilbert space, but this time not from the Hilbert space itself; it is from the Banach space $\B$.\\
 Moreover M.Ismailov, F.Guliyeva, and Y.Nasibov extended some well known and important results of frame theory existing in the classical case to this more general one (see \cite{b-frame}). Except that the definition was incomplete, and there was no example given. Also the stability and preserving were not studied.\\ 
In this paper we will give a detailed definition of $b$-frames and generalize this definition by giving the definition of $K$-$b$-frames, and we will give some examples of both of them. In addition we will introduce some new results about both $b$-frames and $K$-$b$-frames, we will explain in details the $b-$dual product used to define this new concept of frames. We will give the reconstruction formula for both $b$-frames and $K$-$b$-frame. We will introduce the $b$-adjoint operator to be able to manipulate the $b$-dual product in order to study the stability and preservation of both $b$-frames and $K$-$b$-frames.

\newpage

\textbf{\textit{\underline{Notations:}}}
\begin{enumerate}
\item By $\cnt(I)$ we denote the set of continuous functions mapping from an interval $I\subset \C$ to $\C$.
\item By $\cnt_n(I)$ we denote the set of piecewise-continuous functions from $I\subset \C$ to $\C$.
\item If $\h$ and $\z$ are two normed spaces, then $\el(\h,\z)$ denotes the set of bounded linear operators from $\h$ to $\z$.
\item If $T\in\el(\h,\z)$, then $T^*\in\el(\z,\h)$ is the adjoint operator of $T$ and it verifies $\langle Tx,y\rangle_{\z}=\langle x,T^*y\rangle_{\h}$, for all $x\in\h$ and $y\in\z$.
\item $Id_{\z}$ denotes the identity operator of $\z$.
\item $\ran(T)$ denotes the range of $T$.
\item $\ker(T)$ denotes the kernel of $T$.
\item Let $\h$ be a Hilbert space and let $T\in\el(\h)$, such that $\ran(T)$ is closed. $T^{\dagger}$ will denote the pseudo-inverse (or the moore-penrose inverse) of $T$, verifying $TT^{\dagger}h=h,~~\forall h\in\ran(T)$.
\end{enumerate}

\section{PRELIMINARIES}

\begin{thm}\label{douglas}\cite{Douglas}.
Let $\h,\h_1,\h_2$ be Hilbert spaces, and let $S \in \el(\h_1,\h)$, and $T \in \el(\h_2,\h)$. The following statements are equivalent:
\begin{enumerate}
\item[i)] $\ran(S)\subset \ran(T)$;
\item[ii)] $SS^* \leq \lambda^2 TT^*,$ for some $\lambda>0$;
\item[iii)] there exists $U\in \el(\h_1,\h_2)$ such that $S=TU$.
\end{enumerate}
\end{thm}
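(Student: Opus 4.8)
This is Douglas' classical factorization theorem, so the plan is to reproduce its standard proof, establishing the four implications (iii) $\Rightarrow$ (i), (iii) $\Rightarrow$ (ii), (ii) $\Rightarrow$ (iii) and (i) $\Rightarrow$ (iii); since these give (i) $\Rightarrow$ (iii) $\Rightarrow$ (ii) and (ii) $\Rightarrow$ (iii) $\Rightarrow$ (i), the three statements are equivalent.

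The two implications out of (iii) are immediate. If $S = TU$ with $U \in \el(\h_1,\h_2)$, then $Sx = T(Ux) \in \ran(T)$ for every $x \in \h_1$, which is (i); and $SS^* = TUU^*T^* \leq \|U\|^2\, TT^*$, since $UU^* \leq \|U\|^2\, \mathrm{Id}_{\h_2}$ as positive operators and conjugation by $T$ preserves the order, which is (ii) with $\lambda = \|U\|$.

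The core of the argument is (ii) $\Rightarrow$ (iii). First I would rewrite the operator inequality pointwise: evaluating $SS^* \leq \lambda^2 TT^*$ on $y \in \h$ gives $\|S^*y\|^2 = \langle SS^*y, y\rangle \leq \lambda^2\langle TT^*y, y\rangle = \lambda^2\|T^*y\|^2$, so $\|S^*y\| \leq \lambda\|T^*y\|$ for all $y \in \h$. This estimate makes the assignment $T^*y \mapsto S^*y$ a well-defined linear map on $\ran(T^*)\subset \h_2$ (equal values of $T^*y$ force equal values of $S^*y$) that is bounded by $\lambda$; extending it by continuity to $\overline{\ran(T^*)}$ and by zero on $\ker(T) = (\overline{\ran(T^*)})^{\perp}$ yields $V \in \el(\h_2,\h_1)$ with $VT^* = S^*$. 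Taking adjoints gives $TV^* = S$, so $U := V^* \in \el(\h_1,\h_2)$ is the desired operator.

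For (i) $\Rightarrow$ (iii): given $x \in \h_1$, the solution set $\{z \in \h_2 : Tz = Sx\}$ is nonempty by (i) and is a coset of the closed subspace $\ker(T)$, hence has a unique element of minimal norm, which lies in $(\ker T)^{\perp}$; I define $Ux$ to be that element. Linearity of $U$ is forced by linearity of $S$ and $T$ together with uniqueness of the minimal-norm solution. The one genuinely delicate point — and the step I expect to be the main obstacle — is boundedness of $U$, which I would obtain from the closed graph theorem: if $x_n \to x$ and $Ux_n \to w$, then $Sx = \lim Sx_n = \lim TUx_n = Tw$ while $w \in (\ker T)^{\perp}$ because that subspace is closed, so $w$ is the minimal-norm solution for $x$, i.e. $w = Ux$; thus the graph of $U$ is closed and $U \in \el(\h_1,\h_2)$ with $S = TU$, closing the cycle.
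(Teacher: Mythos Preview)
Your proof is correct and is essentially the classical argument from Douglas' original paper. Note, however, that the present paper does not give its own proof of this theorem: it is quoted in the preliminaries with a citation to \cite{Douglas} and used later as a black box (in the proof of Proposition~2.5), so there is no in-paper argument to compare against. Your write-up is a faithful reconstruction of the standard proof and would serve perfectly well if one wished to make the paper self-contained.
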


\begin{thm}\label{prop de T}\cite{b-frame}.
Let $T\in \el(\h,\z)$, where $\h$ and $\z$ are two Hilbert spaces. Then,
\begin{enumerate}
\item[i)] $T^*\in\el(\z,\h)$, and $\Vert T^*\Vert=\Vert T\Vert$;
\item[ii)] $\ran(T)$ is closed if and only if $\ran(T^*)$ is closed;
 \item[iii)] $T$ is surjective if and only if $\exists c>0$, such that $\Vert T^*z\Vert_{\z} \geq c\Vert z\Vert_{\z}$, $\forall z\in \z$;
\end{enumerate}
\end{thm}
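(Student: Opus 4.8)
The plan is to dispatch the three items in order, using only standard Hilbert space tools: the Riesz representation theorem for (i), and the orthogonal decompositions $\h=\ker(T)\oplus\overline{\ran(T^*)}$ and $\z=\ker(T^*)\oplus\overline{\ran(T)}$ together with the open mapping theorem for (ii) and (iii). Throughout one uses the elementary identities $\ker(T^*)=\ran(T)^{\perp}$ and $\ker(T)=\ran(T^*)^{\perp}$, both of which follow at once from the defining relation $\langle Tx,z\rangle_{\z}=\langle x,T^*z\rangle_{\h}$.

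For (i), I would fix $y\in\z$ and observe that $x\mapsto\langle Tx,y\rangle_{\z}$ is a bounded linear functional on $\h$ of norm at most $\|T\|\,\|y\|_{\z}$; the Riesz representation theorem then yields a unique vector $T^*y\in\h$ with $\langle Tx,y\rangle_{\z}=\langle x,T^*y\rangle_{\h}$ for all $x$, and uniqueness makes $y\mapsto T^*y$ linear with $\|T^*y\|_{\h}\le\|T\|\,\|y\|_{\z}$, so $T^*\in\el(\z,\h)$ and $\|T^*\|\le\|T\|$. For the reverse bound I would use $\|Tx\|_{\z}^{2}=\langle x,T^*Tx\rangle_{\h}\le\|x\|_{\h}\,\|T^*\|\,\|Tx\|_{\z}$, hence $\|T\|\le\|T^*\|$.

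For (ii), the pivotal reformulation is that $\ran(T)$ is closed if and only if $T$ is bounded below on $\ker(T)^{\perp}=\overline{\ran(T^*)}$, i.e. $\|Tx\|_{\z}\ge c\|x\|_{\h}$ for all $x\in\ker(T)^{\perp}$ and some $c>0$: one implication is the open mapping theorem applied to the continuous bijection $T|_{\ker(T)^{\perp}}\colon\ker(T)^{\perp}\to\ran(T)$, the other is a direct Cauchy-sequence argument (project onto $\ker(T)^{\perp}$ and use the lower bound). It therefore suffices to show that $T$ is bounded below on $\ker(T)^{\perp}$ exactly when $T^*$ is bounded below on $\ker(T^*)^{\perp}$. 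I would pass through $TT^*$, writing $\|T^*z\|_{\h}^{2}=\langle TT^*z,z\rangle_{\z}$; for $z\in\ker(T^*)^{\perp}=\overline{\ran(T)}$ one approximates $z=\lim Tx_{n}$ with $x_{n}\in\ker(T)^{\perp}$ and transfers the estimate, and the converse direction follows symmetrically once one invokes $T^{**}=T$.

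For (iii), the forward direction combines (ii) with the open mapping theorem: if $T$ is onto then $\ran(T)=\z$ is closed and $\ker(T^*)=\ran(T)^{\perp}=\{0\}$, so $T|_{\ker(T)^{\perp}}$ is a bounded bijection onto $\z$ with bounded inverse; given $z\in\z$ choose its preimage $x\in\ker(T)^{\perp}$ with $\|x\|_{\h}\le\tfrac1c\|z\|_{\z}$, and then $\|z\|_{\z}^{2}=\langle Tx,z\rangle_{\z}=\langle x,T^*z\rangle_{\h}\le\tfrac1c\|z\|_{\z}\,\|T^*z\|_{\h}$ gives $\|T^*z\|_{\h}\ge c\|z\|_{\z}$. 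Conversely, if $\|T^*z\|_{\h}\ge c\|z\|_{\z}$ for all $z$, then $T^*$ is injective and a Cauchy-sequence argument shows $\ran(T^*)$ is closed, so by (ii) $\ran(T)$ is closed; since $\overline{\ran(T)}^{\perp}=\ker(T^*)=\{0\}$, this forces $\ran(T)=\z$. The main obstacle is the estimate transfer inside (ii): it is the only step that is not a direct invocation of Riesz representation or of the open mapping theorem, and it genuinely requires the orthogonal decomposition plus a limiting argument rather than a one-line manipulation; everything in (i) and (iii) is then routine bookkeeping on top of it.
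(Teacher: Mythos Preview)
The paper does not supply its own proof of this theorem: it is quoted verbatim from \cite{b-frame} as a preliminary fact, with no argument given. There is therefore nothing in the paper to compare your proposal against.

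On its own merits your outline is correct and entirely standard. Item (i) is exactly the Riesz-representation construction of the adjoint together with the usual $\|Tx\|^{2}=\langle x,T^{*}Tx\rangle$ trick, and item (iii) is the routine combination of the open mapping theorem, the identity $\ker(T^{*})=\ran(T)^{\perp}$, and the lower-bound characterisation of closed range. For item (ii) your reduction to ``$T$ is bounded below on $\ker(T)^{\perp}$ iff $T^{*}$ is bounded below on $\ker(T^{*})^{\perp}$'' is the right idea, but the phrase ``transfer the estimate'' hides the actual computation. A clean way to finish: if $\|Tx\|\ge c\|x\|$ for all $x\in\ker(T)^{\perp}$, then $\ran(T)$ is closed, so any $z\in\ker(T^{*})^{\perp}=\overline{\ran(T)}=\ran(T)$ is $z=Tx$ with $x\in\ker(T)^{\perp}$ and $\|x\|\le c^{-1}\|z\|$; testing $\|T^{*}z\|$ against the unit vector $x/\|x\|$ gives $\|T^{*}z\|\ge\langle T^{*}z,x\rangle/\|x\|=\|z\|^{2}/\|x\|\ge c\|z\|$. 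The converse is then genuine symmetry via $T^{**}=T$. With that one line made explicit, your proof is complete.

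A minor remark: the paper's statement writes $\Vert T^{*}z\Vert_{\z}$ in (iii), but since $T^{*}\colon\z\to\h$ this should of course be $\Vert T^{*}z\Vert_{\h}$, as you have it in your argument.
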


Let $\h, \z$ be two hilbert spaces and let $\left\langle . , .\right\rangle_{\h}$, $\left\langle .,.\right\rangle_{\z}$ be their corresponding scalar products respectively. We denote by $\Vert .\Vert_{\h}$ (resp. $\Vert .\Vert_{\z}$) the norm of $\h$ (resp. $\z$). 
Let $\B$ be a Banach space with norm $\Vert .\Vert$.\\
Now consider the bilinear mapping $\flat : \h \times \B \longrightarrow \z$, satisfying the condition
\begin{equation}\label{majo}
\exists M>0: ~ \Vert \flat(h,x)\Vert_{\z} \leqslant M \Vert h\Vert_{\h} \Vert x \Vert, \hspace{1cm} \forall h \in \h,~ x \in \B.
\end{equation}
Note that the inequation (\ref{majo}) means that the mapping $\flat$ is bounded and continuous.\\
Fix $z \in \z$ and $x \in \B$, and consider the linear functional $\phi_{z,x}: \h \longrightarrow \C$, defined by\\ $\phi_{z,x}(h)=\left\langle \flat(h,x),z\right\rangle_{\z}$. It is clear that $\phi_{z,x}$ is linear, also by (\ref{majo}) we have, for all $h \in \h$
\begin{align*}
\vert\phi_{z,x}(h)\vert &= \vert \left\langle \flat(h,x),z\right\rangle_{\z}\vert \leqslant \Vert\flat(h,x)\Vert_{\z}\Vert z\Vert_{\z}\\
& \leqslant M \Vert h\Vert_{\h}\Vert x\Vert \vert z\Vert_{\z}.
\end{align*}
Hence, $\Vert \phi_{z,x}\Vert \leqslant M \Vert x\Vert \Vert z\Vert_{\z}$. By the Riesz representation theorem, it follows that there exists a unique element $v\in \h$ such that $\phi_{z,x}(h)=\left\langle h,v\right\rangle_{\h}$, and this element will be called $\flat$-dual product of $z$ and $x$ and will be denoted $\left\langle z/x \right\rangle$. In what follows we will study the properties of this product.\\
Set
\begin{align*}
\xi: \z\times\B &\rightarrow \h\\
(z,x) & \mapsto \langle z/x\rangle
\end{align*}
\begin{pro}\label{dualprod} The mapping $\xi$ has the following properties:
\begin{enumerate}
\item[1)] $\xi$ is sesquilinear.
\item[2)] $\xi$ is bounded and continuous.
\item[3)] Suppose that $\overline{\ran}(\flat)=\z$. If for every $x\in\B$, $\langle z/x\rangle=0$ then $z=0$. We say that $\xi$ is non-degenerated with respect to $z$.
\item[4)] Suppose that there exists $m>0$ such that $$ m\Vert h\Vert_{\h}\Vert x\Vert \leqslant \Vert\flat(h,x)\Vert_{\z}\Vert z\Vert_{\z},~~ \forall x\in \B,\forall h\in\h.$$ If for every $z\in\z$, $\langle z/x\rangle=0,$ then $x=0$. We say that $\xi$  is non-degenerated with respect to $x$.
\end{enumerate}
\end{pro}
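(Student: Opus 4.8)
The plan is to derive all four items from the single defining identity $\langle h,\langle z/x\rangle\rangle_{\h}=\langle\flat(h,x),z\rangle_{\z}$ (valid for every $h\in\h$), together with the uniqueness clause of the Riesz representation theorem: if $\langle h,u\rangle_{\h}=\langle h,v\rangle_{\h}$ for all $h$, then $u=v$. Writing $\xi(z,x)=\langle z/x\rangle$, this reads $\langle h,\xi(z,x)\rangle_{\h}=\langle\flat(h,x),z\rangle_{\z}$, and each algebraic or topological property of $\xi$ will be obtained by transporting the corresponding property of $\flat$ and of the inner products through this identity.

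For (1), I would fix $h$ and use bilinearity of $\flat$ and additivity of $\langle\cdot,\cdot\rangle_{\z}$ to see that $\langle h,\xi(z_1+z_2,x)-\xi(z_1,x)-\xi(z_2,x)\rangle_{\h}=0$ for every $h$, and the analogous identity in the $x$-slot; Riesz uniqueness then gives additivity of $\xi$ in each variable. For homogeneity, $\langle h,\xi(\lambda z,x)\rangle_{\h}=\langle\flat(h,x),\lambda z\rangle_{\z}=\overline{\lambda}\langle h,\xi(z,x)\rangle_{\h}=\langle h,\lambda\,\xi(z,x)\rangle_{\h}$ forces $\xi(\lambda z,x)=\lambda\,\xi(z,x)$, while $\langle h,\xi(z,\lambda x)\rangle_{\h}=\langle\lambda\flat(h,x),z\rangle_{\z}=\lambda\langle h,\xi(z,x)\rangle_{\h}=\langle h,\overline{\lambda}\,\xi(z,x)\rangle_{\h}$ forces $\xi(z,\lambda x)=\overline{\lambda}\,\xi(z,x)$; hence $\xi$ is linear in $z$, conjugate-linear in $x$, i.e. sesquilinear. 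For (2), the estimate $\Vert\phi_{z,x}\Vert\le M\Vert x\Vert\Vert z\Vert_{\z}$ established just before the statement, together with the fact that the Riesz vector has the same norm as the functional it represents, already yields $\Vert\xi(z,x)\Vert_{\h}\le M\Vert z\Vert_{\z}\Vert x\Vert$, which is the boundedness; joint continuity would then follow from sesquilinearity via the standard splitting $\xi(z,x)-\xi(z_0,x_0)=\xi(z-z_0,x)+\xi(z_0,x-x_0)$ and the bound just proved.

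For (3), assuming $\overline{\ran}(\flat)=\z$ and $\langle z/x\rangle=0$ for all $x$, I would note that $\langle\flat(h,x),z\rangle_{\z}=\langle h,\langle z/x\rangle\rangle_{\h}=0$ for all $h,x$, so $z\perp\ran(\flat)$; passing to the closure, $z\perp\z$, hence $z\perp z$ and $z=0$. For (4), assuming the lower bound and $\langle z/x\rangle=0$ for all $z$ (with $x$ fixed), the same identity gives $\langle\flat(h,x),z\rangle_{\z}=0$ for all $h$ and all $z$; choosing $z=\flat(h,x)$ yields $\flat(h,x)=0$ for every $h$, and then the hypothesised inequality $m\Vert h\Vert_{\h}\Vert x\Vert\le\Vert\flat(h,x)\Vert_{\z}\Vert z\Vert_{\z}=0$ with any fixed $h\neq0$ forces $\Vert x\Vert=0$, i.e. $x=0$.

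I do not expect a genuine obstacle here; the only points requiring care are keeping the conjugations consistent with the convention fixed for $\langle\cdot,\cdot\rangle$ in item (1), invoking in (3) that a vector orthogonal to a dense subset is zero, and observing in (4) that once $\flat(h,x)=0$ the right-hand side of the assumed lower bound vanishes regardless of $z$, so the $z$ written there is immaterial. The most delicate step is thus merely the bookkeeping in item (1).
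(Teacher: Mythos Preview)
Your proposal is correct and follows essentially the same route as the paper: each item is reduced to the defining identity $\langle h,\langle z/x\rangle\rangle_{\h}=\langle\flat(h,x),z\rangle_{\z}$ together with Riesz uniqueness, and the bound in (2) comes from $\Vert\langle z/x\rangle\Vert_{\h}=\Vert\phi_{z,x}\Vert$. The only differences are cosmetic: you spell out the $z$-linearity and joint continuity a bit more explicitly, and in (4) you name the choice $z=\flat(h,x)$ rather than leaving it implicit.
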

\begin{proof}
\begin{enumerate}
\item[1)] 
 Let $z\in\z$, $x_1,x_2\in \B$, and $\lambda\in\C$,
\begin{align*}
\langle h, \langle z/\lambda x_1+x_2\rangle\rangle_{\h}=\langle \flat(h,\lambda x_1+x_2),z\rangle_{\z}&= \langle\lambda\flat(h,x_1),z\rangle_{\z}+\langle \flat(h,x_2),z\rangle_{\z}\\&= \lambda\langle h, \langle z/x_1\rangle\rangle_{\h}+\langle h, \langle z/x_2\rangle\rangle_{\h}\\
&=\langle h,\bar{\lambda} \langle z/x_1\rangle\rangle_{\h}+\langle h, \langle z/x_2\rangle\rangle_{\h}.
\end{align*}
Then $\xi$ is anti-linear with respect to the second variable. In the same way, we prove that $\xi$ is linear with respect to the first variable.
\item[2)] $\Vert \langle z/x\rangle\Vert_{\h}=\Vert \phi_{z,x}\Vert\leqslant M\Vert x\Vert \Vert z\Vert_{\z}, ~\forall (z,x)\in \z\times\B$, then $\xi$ is bounded and continuous for the product topology of $\z\times\B$.
\item[3)] Suppose that $\overline{\ran}(\flat)=\z$, and that for all $x\in\B$ $\langle z/x\rangle=0$, this implies that 
, $$\langle h,\langle z/x\rangle\rangle_{\h}=0,~~~\forall h\in\h,~\forall x\in\B.$$
$\Rightarrow$
$$\langle \flat(h,x),z\rangle_{\z}=0,~~~\forall h\in\h,~\forall x\in\B.$$
Hence $$z=0.$$
\item[4)] \begin{align*}
\langle z/x\rangle =0, \forall z\in\z &\Rightarrow \langle h,\langle z/x\rangle\rangle_{\h}=0, ~~\forall h\in\h,\forall z\in\z\\
&\Rightarrow \langle \flat(h,x),z\rangle_{\z}=0,~~\forall h\in\h,\forall z\in\z\\
&\Rightarrow \flat(h,x)=0,~~\forall h\in\h,\\
&\Rightarrow m\Vert h\Vert_{\h}\Vert x\Vert=0,~~\forall h\in\h\\
&\Rightarrow x=0.
\end{align*} 
 \end{enumerate}
 \end{proof}
Now let $\{x_i\}_{i\in \N}\subset \B$ be a vector sequence, we will first give some necessary definitions :
\begin{defi}\cite{b-frame}.
$\{x_i\}_{i\in \N}$ is said to be $\flat$-orthonormal in $\z$ if
\begin{equation}
\langle \flat(h,x_i)/x_j\rangle=\delta_i^jh, ~~~\forall h\in\h,~i,j\in \N.
\end{equation}
\end{defi}

\begin{defi}\cite{b-frame}.
$\{x_i\}_{i\in \N}$ is said to be a $\flat$-basis for $\z$ if for every $z \in \z$, there exists a unique sequence $\{h_i\}_{i\in\N}$ in $\h$ such that:
$$ z=\sum_{i=1}^{\infty} \flat(h_i,x_i).$$
if in addition $\{x_i\}_{i\in \N}$ is $\flat$-orthonormal, then $\{x_i\}_{i\in \N}$ is called $\flat$-orthonormal basis in $\z$.
\end{defi}


\begin{defi}\label{complisdegen}\cite{b-frame}.
Let $\{x_i\}_{i\in\N}\subset \B$. $\{x_i\}_{i\in\N}$ is said to be $\flat$-complete in $\z$ if and only if
 \begin{equation}\label{nondeg}
\langle z/x_i\rangle=0,~\forall i\in\N \Rightarrow z=0.\end{equation}
\end{defi}
 
\begin{rmq}
 If in addition to (\ref{majo}) the bilinear mapping $\flat$ verifies 
\begin{equation}
\exists m>0;~~ m\Vert h\Vert_{\h}\Vert x\Vert \leqslant \Vert \flat(h,x)\Vert_{\z}, ~~~\forall h\in\h, \forall x\in\B.
\end{equation}
and $\overline{Span}^{\B}\{x_i/i\in\N\}=\B$. Then $\{x_i\}_{i\in\N}$ is $\flat$-complete in $\z$ if and only if $\overline{\ran(\flat)}^{\z}=\z$. 
\end{rmq}

\begin{pro}\cite{b-frame}.
Let $\{x_i\}_{i\in\N}\subset\B$, we have the following equivalence:
$$\left[\{x_i\}_{i\in\N} \text{ is a } \flat\text{-orthonormal basis in }\z\right]\Leftrightarrow \left[\{x_i\}_{i\in\N}\text{ is }\flat\text{-complete in }\z\right].$$
\end{pro}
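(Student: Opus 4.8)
The plan is to establish the two implications separately, the workhorse being the $\flat$-basis expansion together with the orthogonality that $\flat$-orthonormality forces among the vectors $\flat(h_i,x_i)$ inside $\z$.

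For the direction $(\Rightarrow)$, assume $\{x_i\}_{i\in\N}$ is a $\flat$-orthonormal basis in $\z$, fix $z\in\z$, and write its unique expansion $z=\sum_{i=1}^{\infty}\flat(h_i,x_i)$ with $\{h_i\}_{i\in\N}\subset\h$. By Proposition \ref{dualprod} the map $z\mapsto\langle z/x_j\rangle$ is linear and bounded, hence commutes with the series, so $\langle z/x_j\rangle=\sum_{i=1}^{\infty}\langle\flat(h_i,x_i)/x_j\rangle=\sum_{i=1}^{\infty}\delta_i^j h_i=h_j$, the middle equality being precisely $\flat$-orthonormality. Consequently, if $\langle z/x_i\rangle=0$ for all $i\in\N$ then $h_i=0$ for all $i$, and since $\flat$ is bilinear, $z=\sum_i\flat(0,x_i)=0$; this is the $\flat$-completeness of Definition \ref{complisdegen}.

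For the direction $(\Leftarrow)$, the first step I would record is the identity $\langle\flat(h_i,x_i),\flat(h_j,x_j)\rangle_{\z}=\langle h_i,\langle\flat(h_j,x_j)/x_i\rangle\rangle_{\h}=\delta_i^j\langle h_i,h_j\rangle_{\h}$, obtained by combining the defining relation $\langle\flat(h,x),z\rangle_{\z}=\langle h,\langle z/x\rangle\rangle_{\h}$ with $\flat$-orthonormality; thus the vectors $\flat(h_i,x_i)$ are pairwise orthogonal in $\z$ and $\Vert\flat(h_i,x_i)\Vert_{\z}=\Vert h_i\Vert_{\h}$. From this I would deduce a Bessel-type inequality: given $z\in\z$, put $h_i=\langle z/x_i\rangle$ and $P_n z=\sum_{i=1}^{n}\flat(h_i,x_i)$; the same identity shows $z-P_n z\perp\flat(g,x_j)$ for all $g\in\h$ and all $j\le n$, hence $z-P_n z\perp P_n z$, so $\Vert z\Vert_{\z}^{2}=\Vert z-P_n z\Vert_{\z}^{2}+\Vert P_n z\Vert_{\z}^{2}\ge\sum_{i=1}^{n}\Vert h_i\Vert_{\h}^{2}$. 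Letting $n\to\infty$ yields $\sum_i\Vert\langle z/x_i\rangle\Vert_{\h}^{2}\le\Vert z\Vert_{\z}^{2}<\infty$, and since the summands are orthogonal the partial sums of $\sum_i\flat(\langle z/x_i\rangle,x_i)$ are Cauchy in $\z$, so this series converges to some $w\in\z$.

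It then remains to identify $w$ with $z$ and to verify uniqueness. Passing the bounded linear map $\langle\,\cdot\,/x_j\rangle$ through the series gives $\langle w/x_j\rangle=\sum_i\delta_i^j\langle z/x_i\rangle=\langle z/x_j\rangle$, so $\langle z-w/x_j\rangle=0$ for every $j$, and $\flat$-completeness forces $z=w=\sum_i\flat(\langle z/x_i\rangle,x_i)$; uniqueness of the coefficients follows by the same token, since from $\sum_i\flat(h_i,x_i)=\sum_i\flat(g_i,x_i)$ one gets $h_j=g_j$ upon applying $\langle\,\cdot\,/x_j\rangle$. Thus $\{x_i\}_{i\in\N}$ is a $\flat$-basis and, being $\flat$-orthonormal, a $\flat$-orthonormal basis. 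I expect the sole genuine obstacle to be the convergence of the reconstruction series in $(\Leftarrow)$: $\flat$-completeness alone cannot deliver it, and it is exactly the orthogonality relation $\langle\flat(h_i,x_i),\flat(h_j,x_j)\rangle_{\z}=\delta_i^j\langle h_i,h_j\rangle_{\h}$ coming from $\flat$-orthonormality that simultaneously produces the Bessel bound and makes the partial sums Cauchy; everything else is bookkeeping with the sesquilinear form $\xi$ and the bilinearity of $\flat$.
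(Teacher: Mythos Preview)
The paper does not supply its own proof here—the proposition is quoted from \cite{b-frame} without argument—so there is no in-paper proof to compare against. Your argument is correct and is the natural one.

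One remark worth making explicit: as literally stated, the $(\Leftarrow)$ direction is false, since $\flat$-completeness alone cannot manufacture $\flat$-orthonormality. You have, rightly, read the proposition as carrying the implicit standing hypothesis that $\{x_i\}_{i\in\N}$ is already $\flat$-orthonormal, and then shown that $\flat$-completeness upgrades it to a $\flat$-basis; this is surely the intended reading in the source. Under that interpretation your orthogonality identity $\langle\flat(h_i,x_i),\flat(h_j,x_j)\rangle_{\z}=\delta_i^j\langle h_i,h_j\rangle_{\h}$, the resulting Bessel bound, the convergence of $\sum_i\flat(\langle z/x_i\rangle,x_i)$, and the identification $w=z$ via $\flat$-completeness all go through cleanly, and uniqueness follows exactly as you say.
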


\begin{ex}
Let $-\infty<a<b$ and let $\omega:[a,b[\rightarrow \R^*_+$ be a continuous weight,\\(i.e $\forall n\in\N; \int_a^b \vert x\vert^n\omega (x)dx<+\infty$).\\
We consider the Hilbert space 
$$H_{\omega}=\left\lbrace h\in\cnt([a,b[)/\int_a^b\vert h(t)\vert^2\omega(t)dt<\infty\right\rbrace.$$
And the space
$$H'_{\omega}=\left\lbrace h\in\cnt_m([a,b[)/\int_a^b\vert h(t)\vert^2\omega(t)dt<\infty\right\rbrace.$$
Note that $\overline{H'_{\omega}}^{\Vert .\Vert_{\infty}}=H_{\omega}$ because $\overline{\C_m([a,b[)}=\C([a,b[)$ everywhere.
Provide $H'_{\omega}$ with the scalar product and norm such as
$$\langle h,g\rangle_{\omega}=\int_a^b h(t)\bar{g(t)}\omega(t)dt,~ and ~~~  \Vert h\Vert_{\omega}=\left(\int_a^b \vert h(t)\vert^2\omega(t)dt\right)^{\frac{1}{2}}.$$
Let 
\begin{align*}
b:H'_{\omega}\times L^{\infty}([a,b[)\cap\cnt_m([a,b[)&\rightarrow \overline{L^2([a,b[)\cap\cnt_m([a,b[)}^2\\
(h,g)&\mapsto hg\sqrt{\omega}.
\end{align*}
We have 
$$\Vert b(h,g)\Vert_2=\left( \int_a^b\vert h(t)\vert^2\vert g(t)\vert^2\omega(t)dt\right)^{\frac{1}{2}}
\leq \sup_{t\in[a,b[}\vert g(t)\vert\left(\int_a^b\vert h(t)\vert^2\omega(t)dt\right)^{\frac{1}{2}}
\leq \Vert h\Vert_{\omega}\Vert g\Vert_{\infty}.$$
And 
$$\langle  b(h,g),f\rangle_2=\int_a^b b(h,g)(t)\bar{f(t)}dt
=\int_a^b hg\sqrt{\omega}(t)\bar{f(t)}d(t)
=\int_a^b h\overline{\bar{g}(t)f(t)}\omega^{-\frac{1}{2}}(t)\omega(t)dt= \langle h, \bar{g}f\omega^{-\frac{1}{2}}\rangle_{\omega}.$$
Hence we have $\langle f/g\rangle(t)=\bar{g}(t)f(t)\omega^{-\frac{1}{2}}(t)$, $\forall t\in [a,b[$.\\
Let $\Delta=\{a=a_0,..,a_k=a+k\frac{b-a}{m},..,a_m=b\}$ be a subdivision of $[a,b[$, and consider the $b$-orthogonal sequence $\{g_i\}_{i\in\N}$, such that $g_i(t)=\omega^{-\frac{1}{4}}(t)\ind_{[a_i,a_{i+1}[}(t)$; in fact we have $\langle b(h,g_i/g_j\rangle=\delta_i^j h$. In addition, if we suppose that $\langle f/g_i\rangle=0$ for all $i\in\N$, then we have $\omega^{-\frac{3}{4}}(t)\ind_{[a_i,a_{i+1}[}(t)f(t)=0$, which means that $f\equiv0$ on each subdivision $[a_i,a_{i+1}[$ of $[a,b[$, hence $f\equiv0$ on $[a,b[$, so $\{g_i\}_{i\in\N}$ is $b$-complete in $\overline{L^2([a,b[)\cap\cnt_m([a,b[)}^2$.
\end{ex}

\begin{ex}\label{exemple b-orth}(In finite dimension)\\
Let $\h=\R^4$ with basis $\{e_i\}_{1\leq i\leq 4}$, $\z=\R^3$ with basis $\{v_1,v_2,v_3\}$, and $\B=\R^2$ with basis $\{f_1,f_2\}$. Let $b:\h\times\B\rightarrow\z$ be as follows:
\begin{align*}
b(e_1,f_1)&=u_1,~~~~ b(e_2,f_1)=u_3,~~~b(e_3,f_1)=0,~~~b(e_4,f_1)=0
\\b(e_1,f_2)&=u_2,~~~~b(e_2,f_2)=0,~~~~ b(e_3,f_2)=0,~~~                                                                                                                                                                                                                                                                                                                                                                                                                                                                         ; b(e_4,f_2)=0.
\end{align*}
for each $h=\sum_{i=1}^4h_ie_i\in\h$, and $x=x_1f_1+x_2f_2\in\B$ we have, $b(h,x)=h_1x_1u_1+h_1x_2u_2+h_2x_1u_3,$
and $\Vert b(h,x)\Vert_{\z}^2=\Vert h_1x_1u_1+h_1x_2u_2+h_2x_1u_3\Vert_{\z}^2=h_1^2x_1^2+h_1^2x_2^2+h_2^2x_1^2=(h_1^2+h_2^2)(x_1^2+x_2^2)-(h_2x_2)^2$
So
$$\Vert b(h,x)\Vert_{\z}\leq \beta \Vert h\Vert_{\h}\Vert x\Vert.$$
where $\beta=1$.
By a simple calculus we obtain  the $b$-dual product for every $z\in\z$, and every $x\in\B$, $\langle z/x\rangle= (x_1z_1+x_2z_2)e_1+x_1z_3e_2.$\\
Chose $\{X_1,X_2\}\subset \B$ such that $X_1=f_1-f_2$ and $X_2=f_1+f_2$, we have
$$\left \{
\begin{array}{rcl}
 \langle z/X_1\rangle&=&0 \\
\langle z/X_2\rangle &=&0
\end{array}
\right.
\Rightarrow
\left \{
\begin{array}{rcl}
 z_1-z_2&=&0 \\
 z_1+z_2&=&0\\
z_3&=&0
\end{array}
\right.
\Rightarrow
z=0
$$
Then $\{X_1,X_2\}$ is a $\flat$-orthonormal basis in $\z$.
\end{ex}


\section{Main results}

\subsection{$\flat$-frames and some examples}
In this section we will give a more detailed definition of $b$-frames, that were introduced in \cite{b-frame}, and we will give some examples that have never been given in the litterature to see how this concept works.
\begin{defi}
The sequence $\{x_i\}_{i\in \N} \subset \B$ is called a $\flat$-frame for $\z$ if there exist constants\\ $0<A\leq B <\infty$ such that
 \begin{equation}\label{defb-frame}
 A \Vert z\Vert^2_{\z} \leqslant \sum_{i=1}^{\infty}\Vert \left\langle z/ x_i\right\rangle\Vert^2_{\h} \leqslant B\Vert z\Vert^2_{\z}, ~~~\forall z \in \z.
 \end{equation}
\end{defi}
Constants $A$ and $B$ are called the bounds of $\flat$-frame. When the right side of (\ref{defb-frame}) is verified, we say that the sequence $\{x_i\}_{i\in\N}$ is $\flat$-Besselian in $\z$ with bound $B$.\\
If $A=B$, then $\{x_i\}_{i\in\N}$ is a tight $\flat$-frame for $\z$.\\
If $A=B=1$, then $\{x_i\}_{i\in\N}$ is Parseval $\flat$-frame for $\z$.

\begin{ex}\label{expl}
Choose $\h=\R^3$, $\B=\R^2$, and $\z=\R^4$ such that $(e_1,e_2,e_3),(f_1,f_2), and (u_1,u_2,u_3,u_4)$ are their canonical bases respectively. Define the bilinear mapping $b:\h\times\B\rightarrow \z$ as follows
\begin{align*}
b(e_1,f_1)&=u_1,~~~~ b(e_2,f_1)=u_3,~~~~b(e_3,f_1)=u_1,
\\b(e_1,f_2)&=u_2,~~~~b(e_2,f_2)=u_4,~~~~ b(e_3,f_2)=u_2.
\end{align*}
for each $h=\sum_{i=1}^3h_ie_i,$ and $x=\sum_{i=1}^2x_if_i$, we obtain
$$b(h,x)=(h_1+h_3)x_1u_1+(h_1+h_3)x_2u_2+h_2x_1u_3+h_2x_2u_4,$$
hence
$$\Vert b(h,x)\Vert^2_{\z}=(h_1^2+h_2^2+h_3^2)(x_1^2+x_2^2)+2h_1h_3(x_1^2+x_2^2)\leqslant 2\Vert h\Vert^2\Vert x\Vert^2.$$

for $z\in\z$ we have 
$$\langle b(h,x),z\rangle=\langle h,\langle z/x\rangle\rangle
=h_1(x_1z_1+x_2z_2)+h_2(x_1z_3+x_2z_4)+h_3(x_1z_1+x_2z_2).
$$
$~~~~\Rightarrow~~~~ \langle z/x\rangle=(x_1z_1+x_2z_2)e_1+(x_1z_3+x_2z_4)e_2+(x_1z_1+x_2z_2)e_3.$\\
Choose $y_1=f_1+f_2,~~ y_2=f_1-f_2$ in $\B$. We have 
$$\Vert \langle z/y_1\rangle\Vert^2_{\h}=2 z_1^2+2 z_2^2+ z_3^2+ z_4^2+4 \vert z_1z_2\vert+2 \vert z_3z_4\vert.$$
and
$$\Vert \langle z/y_2\rangle\Vert^2_{\h}=2 z_1^2+2 z_2^2+ z_3^2+ z_4^2-4 \vert z_1z_2\vert-2 \vert z_3z_4\vert.$$
we obtain, $\forall z\in \z$:
$$\Vert z\Vert_{\z}^2\leqslant\Vert \langle z/y_1\rangle\Vert^2_{\h}+\Vert \langle z/y_2\rangle\Vert^2_{\h}=4 z_1^2+4 z_2^2+2 z_3^2+2 z_4^2\leqslant 4\Vert z\Vert_{\z}^2.$$
Then $\{y_1,y_2\}$ is a $b$-frame for $\z$ with bounds $A=1$ and $B=4$.
\end{ex}

\begin{ex}
Consider the Hilbert spaces $\h=\z=L^2(\C)$ and the Banach space $\B=L^{\infty}(\C)$, and consider the bilinear mapping 
\vspace{-1cm}
\begin{align*}
b: \h\times \B &\rightarrow \z\\
(h,g) &\mapsto hg
\end{align*}
$b$ is well defined and verifies (\ref{majo}), in fact
$$\Vert b(h,g)\Vert_{\z}=\Vert hg\Vert_2= \sqrt{\int_{\C}\vert h(t)g(t)\vert^2 dt}
\leqslant \Vert g\Vert_{\infty}\sqrt{\int_{\C}\vert h(t)\vert^2 dt}
\leqslant \Vert h\Vert_2~ \Vert g\Vert_{\infty}<\infty.$$
now let $f\in\z$, we have
$$\langle b(h,g),f\rangle_{\z}= \int_{\C} h(t)g(t)\bar{f}(t) dt=\int_{\C} h(t)\overline{\bar{g}(t)f(t)} dt
=\langle h,\langle f/g\rangle\rangle_{\h}.$$
Hence $\langle f/g\rangle=\bar{g}f,~~\forall f\in\z,\forall g\in \B$.
Let $\{g_i\}_{i\in\N}\subset\B$, such that $g_i(t)=\frac{1}{i+1}$, $\forall i\in\N, \forall t\in\C$.\\
 One has
$$\Vert \langle f/g_i\rangle\Vert^2_{\h}= \Vert g_i\bar{f}\Vert^2_2= \int_{\C}\left\vert \frac{1}{i+1}\bar{f}(t)\right\vert^2dt=\frac{1}{(i+1)^2}\int_{\C}\vert f(t)\vert^2dt=\frac{1}{(i+1)^2}\Vert f\Vert^2_2,$$
then, $$\sum_{i\in\N}\Vert \langle f/g_i\rangle\Vert^2_{\h}=\frac{\pi^2}{6}\Vert f\Vert^2_2.$$
Hence $\{g_i\}_{i\in\N}$ is a tight $b$-frame for $\z$ with bound $A=B=\frac{\pi^2}{6}$. 
\end{ex}
 
\subsection{$K$-$\flat$-frames and some examples}
In this section we generalize the concept of $b$-frames by the same idea that Gavruta generalized frames for Hilbert spaces by $K$-frames for Hilbert spaces. Recall first the definition given by Gavruta.
 \begin{defi}\cite{Gavruta}
Let $K\in \el(\z)$ and let $K^*$ be its adjoint. A sequence $\{z_i\}_{i\in\N}\subset\z$ is a $K$-frame for $\z$ if and only if there exist constants $0<A<B<\infty$, such that
\begin{equation}\label{frame}
A\Vert K^*z\Vert_{\z}^2 \leqslant \sum_{i=1}^{\infty} \vert\langle z,z_i\rangle_{\z}\vert^2\leqslant B \Vert z\Vert_{\z}^2,~~ \forall z\in\z.
\end{equation}
  constants $A$ and $B$ are called lower and upper frame bound respectively.
\end{defi}
If $K\equiv Id_{\z}$, then $\{f_i\}_{i\in\N}$ is a frame for $\z$.
Now let $K\in\el(\z)$, and let $K^*$ be its adjoint, we define a $K$-$\flat$-frame as follows:
 \begin{defi}\label{k-b-frame}
 The sequence $\{x_i\}_{i\in \N} \subset \B$ is called a $K$-$\flat$-frame for $\z$ if there exist constants $0<A\leq B <\infty$ such that
 \begin{equation}\label{k-b-frame}
 A \Vert K^*z\Vert^2_{\z} \leqslant \sum_{i=1}^{\infty}\Vert \left\langle z/ x_i\right\rangle\Vert^2_{\h} \leqslant B\Vert z\Vert^2_{\z}, ~~~\forall z \in \z.
 \end{equation}
 \end{defi}
 The constant $A$ (resp, $B$) is called left (resp, right) bound of the $K$-$\flat$-frame.
When only the right inequation of (\ref{k-b-frame}) is fulfilled, then the sequence $\{x_i\}_{i\in N}$ is called $K$-$\flat$-Besselian in $\z$ with bound $B$.\\
If $A=B$, then $\{x_i\}_{i\in\N}$ is a tight $K$-$\flat$-frame for $\z$.\\
If $A=B=1$, then $\{x_i\}_{i\in\N}$ is Parseval $K$-$\flat$-frame for $\z$.
\begin{rmq}
Note that a $\flat$-frame is a $K$-$\flat$-frame with $K\equiv Id_{\z}$.
\end{rmq}

\begin{ex}
Let $\h_1,\h_2, \z_1,\z_2$ be Hilbert spaces. Let $\z=\z_1\oplus \z_2$, $\h=\h_1\oplus \h_2$ and let $\B=\el(\h_1,\z_1)\oplus \el(\h_2,\z_2)$ be a Banach space.\\
Consider the bilinear mapping $\flat : \h\times \B \longrightarrow \z$ such that $ \flat ((h_1\oplus h_2),(t_1\oplus t_2)) =t_1(h_1)\oplus t_2(h_2)$.
Let $z_1\oplus z_2\in\z$, we have 
$$\langle \flat((h_1\oplus h_2),(t_1\oplus t_2)),z_1\oplus z_2\rangle_{\z}= \langle t_1(h_1)\oplus t_2(h_2),z_1 \oplus z_2\rangle_{\z}
=\langle h_1\oplus h_2,t_1^*(z_1)\oplus t_2^*(z_2)\rangle_{\h}.$$
We claim that 
$$\langle z_1\oplus z_2/t_1\oplus t_2\rangle = t_1^*(z_1)\oplus t^*_2(z_2)=\langle z_1/t_1\rangle\oplus\langle z_2/t_2\rangle.$$
 Now suppose that $(t_1^i)_{i\in \N}$ is a $\flat$-frame for $\z_1$ , i.e, there exist constants $0<A_1\leq B_1<\infty$, such that
\begin{equation}\label{g-frame1}
 A_1\Vert z_1\Vert^2_{\z_1} \leqslant \sum_{i\in\N} \Vert t_1^{i*}(z_1)\Vert_{\h_1}^2\leqslant B_1\Vert z_1\Vert^2_{\z_1}, ~~ \forall z_1 \in \z_1.
\end{equation}
If we suppose that $(t_2^i)_{i\in \N} \approxeq \circleddash$ (i.e. all the operators $t_2^i$ are zero), then $(t_1^i \oplus t_2^i)_{i\in \N}$ isn't a $\flat$-frame for $\z$, but if we set 
$K: \z \longrightarrow \z$ such that, for every $z=(z_1\oplus z_2)\in \z$, $K(z_1\oplus z_2)=z_1$. Then we have $\Vert K^*(z)\Vert^2_{\z}=\Vert z_1\Vert^2_{\z_1}$, because $K$ is self adjoint.\\
So, for each $z_1\in\z_1, z_2\in\z_2$ we have,
\begin{align*}
A_1\Vert K^*(z_1\oplus z_2)\Vert^2_{\z} \leqslant \sum_{i\in\N} \Vert t_1^{i*}(z_1)\Vert_\z^2= \sum_{i\in\N} \Vert t_1^{i*}\oplus t_2^{i*}(z_1\oplus z_2)\Vert_\z^2&=\sum_{i\in\N} \Vert \langle z_1\oplus z_2/t_1\oplus t_2\rangle\Vert_\h^2\\
& \leqslant B_1\Vert z_1\Vert^2_{\z_1} \leq B_1\Vert z_1\oplus z_2\Vert^2_{\z}.
\end{align*}
Which shows that $\{t_1^i\oplus t_2^i\}_{i\in\N}$ is a $K$-$\flat$-frame for $\z$ with bounds $A_1$ and $B_1$.
\end{ex}

\begin{ex}
Choose $\h=\R^3$, $\B=\R^2$, and $\z=\R^4$ such that $(e_1,e_2,e_3),(f_1,f_2), and (u_1,u_2,u_3,u_4)$ are their canonical bases respectively. Define the bilinear form $b:\h\times\B\rightarrow \z$ as follows
\begin{align*}
b(e_1,f_1)&=u_1-u_2,~~~~ b(e_2,f_1)=u_1+u_2,~~~~b(e_3,f_1)=u_3,
\\b(e_1,f_2)&=u_1+u_2,~~~~b(e_2,f_2)=-u_1+u_2,~~~~ b(e_3,f_2)=u_4.
\end{align*}
for $h=\sum_{i=1}^3h_ie_i$, and $x=x_1f_1+x_2f_2$, we have
$$b(h,x)=[h_1(x_1+x_2)+h_2(x_1-x_2)]u_1+[h_1(-x_1+x_2)+h_2(x_1+x_2)]u_2+h_3x_1u_3+h_3x_2u_4.$$
and, $$\Vert b(h,x)\Vert^2_{\z}=2(h_1^2+h_2^2)(x_1^2+x_2^2)+h_3^2(x_1^2+x_2^2).$$
Hence
$$\Vert h\Vert_{\h}\Vert x\Vert\leqslant \Vert b(h,x)\Vert_{\z}\leqslant \sqrt{2}\Vert h\Vert_{\h}\Vert x\Vert.$$
In the other hand $\ran(b)=\R^4=\z$.\\
Let $z=\sum_{i=1}^4z_iu_i \in \z$, then
\begin{equation*}
\langle b(h,x),z\rangle_{\z}=\langle h,\langle z/x\rangle\rangle_{\h}.
\end{equation*}
and also
$$\langle b(h,x),z\rangle_{\z}=h_1[(x_1+x_2)z_1+(-x_1+x_2)z_2]+h_2[(x_1-x_2)z_1+(x_1+x_2)z_2]+h_3[x_1z_3+x_2z_4].$$
It follows that, $$\langle z/x\rangle=[(x_1+x_2)z_1+(-x_1+x_2)z_2]e_1+[(x_1-x_2)z_1+(x_1+x_2)z_2]e_2+[x_1z_3+x_2z_4]e_3.$$
Choose $x=f_1+f_2$, we obtain $$\langle z/x\rangle=2z_1e_1+2z_2e_2+(z_3+z_4)e_3.$$
Hence
$$ \Vert\langle z/x\rangle\Vert^2_{\h}=4 z_1^2+4 z_2^2+ z_3^2+ z_4^2+\vert z_3z_4\vert\leqslant 4\Vert z\Vert^2_{\z}, ~~\forall z\in\z.$$
 Then we can say that $\{x=f_1+f_2\}$ is a $b$-Bessel sequence for $\z=\C^4$. Now consider the linear orthogonal projection on $\C u_1\oplus\C u_2$: \begin{align*}
K: \z&\rightarrow \z\\ (z_1,z_2,z_3,z_4)&\mapsto (z_1,z_2)
\end{align*}
Then we easily get
$$ \Vert K^*z\Vert^2_{\z}\leqslant\Vert\langle z/x\rangle\Vert^2_{\h}\leqslant4\Vert z\Vert^2_{\z}.$$
And $\{x=f_1+f_2\}$ becomes a $K$-$b$-frame for $\z$ with bounds $A=1$ and $B=4$.
\end{ex}

\begin{lem}\label{lem}
If $\{x_i\}_{i\in\N}$ is a $\flat$-orthonormal basis for $\z$, and $z\in\z$, such that $z=\underset{i\in\N}{\sum}\flat(h_i,x_i)$, $h_i\in\h,\forall i \in \N$, then 
$$\sum_{i\in\N}\Vert h_i\Vert^2_{\h}=\Vert z\Vert^2_{\z}.$$
\end{lem}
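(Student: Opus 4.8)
The plan is to first show that the coefficients in the given expansion are exactly the $\flat$-dual products, $h_i=\langle z/x_i\rangle$, and then to convert the desired norm identity into an identity for the inner product of $\z$ by pairing each $h_i$ with the term $\flat(h_i,x_i)$.

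For the first step I would fix $j\in\N$ and apply the $\flat$-dual product $\xi(\cdot,x_j)$ to the norm-convergent series $z=\sum_{i\in\N}\flat(h_i,x_i)$. Since $\xi$ is bounded, hence continuous (Proposition~\ref{dualprod}, part~2), and linear in its first variable, it passes through the series, and the $\flat$-orthonormality of $\{x_i\}_{i\in\N}$ collapses the resulting sum: $\langle z/x_j\rangle=\sum_{i\in\N}\langle\flat(h_i,x_i)/x_j\rangle=\sum_{i\in\N}\delta_i^j h_i=h_j$. For the second step I would invoke the defining property of the $\flat$-dual product, $\langle h,\langle z/x_j\rangle\rangle_{\h}=\langle\flat(h,x_j),z\rangle_{\z}$ for all $h\in\h$; choosing $h=h_j$ and using $\langle z/x_j\rangle=h_j$ gives $\Vert h_j\Vert_{\h}^2=\langle\flat(h_j,x_j),z\rangle_{\z}$. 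Summing over $j$ and moving the continuous functional $\langle\,\cdot\,,z\rangle_{\z}$ back inside the series $\sum_j\flat(h_j,x_j)=z$ yields $\sum_{j\in\N}\Vert h_j\Vert_{\h}^2=\langle z,z\rangle_{\z}=\Vert z\Vert_{\z}^2$, which is the claim (and incidentally shows $\sum_j\Vert h_j\Vert_{\h}^2<\infty$).

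The only points that need care are the two interchanges of an infinite sum with a continuous (sesqui)linear map: once with $\xi(\cdot,x_j)$ and once with $\langle\,\cdot\,,z\rangle_{\z}$. Both are justified by applying boundedness to the partial sums of the convergent series $z=\sum_i\flat(h_i,x_i)$, so I do not anticipate a genuine obstacle; the argument uses only $\flat$-orthonormality together with the existence of the expansion, not the uniqueness clause of the $\flat$-basis definition.
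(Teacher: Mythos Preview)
Your proposal is correct and follows essentially the same route as the paper's proof: first obtain $\langle z/x_j\rangle=h_j$ from $\flat$-orthonormality, then use the defining relation $\langle h_j,\langle z/x_j\rangle\rangle_{\h}=\langle\flat(h_j,x_j),z\rangle_{\z}$ and sum over $j$. You are in fact a bit more careful than the paper in explicitly justifying the two interchanges of limit and (sesqui)linear map via continuity.
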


\begin{proof}
$\{x_i\}_{i\in\N}$ is $\flat$-orthonormal, then for every $i,j$ in $\N$ we have $\langle \flat(h_i,x_i)/x_i\rangle= \delta_i^jh_i$.\\
Hence $$\left\langle \underset{i\in\N}{\sum}\flat(h_i,x_i)/x_j\right\rangle= \sum_{i\in\N}\delta_i^jh_i.$$
 And as $\{x_i\}_{i\in\N}$ is a $\flat$-basis for $\z$, then every $z\in\z$ is uniquely represented as $z=\underset{i\in\N}{\sum}\flat(h_i,x_i)$. This implies that $\langle z/x\rangle = \underset{i\in\N}{\sum}\delta_i^jh_i$.\\
  For $i=j$, we have $\langle z/x_j\rangle=h_j$. Hence
$$\langle h_j,\langle z/x_j\rangle\rangle_{\h}=\Vert h_j\Vert^2_{\h},$$
$\Rightarrow$ 
$$\langle \flat(h_j,x_j),z\rangle_{\z}=\Vert h_j\Vert^2_{\h},$$
$\Rightarrow$ 
$$\left\langle \sum_{j\in\N} \flat(h_j,x_j),z\right\rangle_{\z} = \sum_{j\in\N}\Vert h_j\Vert^2_{\h}=\Vert z\Vert^2_{\z}.$$
\end{proof}

\begin{thm}\label{bessel=Tdef}
A sequence $\{x_i\}_{i\in\N} \subset \B$ is $K$-$\flat$-Besselian in $\z$ with bound $B$ if and only if the operator 
\begin{align*}
T: l_2(\h) &\rightarrow \z\\
\{h_i\}_{i\in\N} &\mapsto \sum_{i\in\N} \flat(h_i,x_i)
\end{align*}
is well defined, bounded, has closed range, and $\Vert T\Vert \leq \sqrt{B}.$ Moreover , its adjoint operator $T^*: \z \longrightarrow l_2(\h)$ is determined by $T^*(z)= \{\left\langle z/ x_i\right\rangle\}_{i\in\N}$.
\end{thm}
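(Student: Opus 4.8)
The plan is to prove the equivalence by unwinding the definition of $K$-$\flat$-Besselian and identifying the relevant operator, then verifying its boundedness, closed range, and computing its adjoint.

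\textbf{Setting up the synthesis operator.} First I would check that $T$ is well defined on $l_2(\h)$. For a finite sum, $\Vert\sum_{i=1}^n \flat(h_i,x_i)\Vert_{\z}$ need not be controlled by $\Vert\{h_i\}\Vert_{l_2(\h)}$ without some hypothesis, so the natural route is to define $T$ first on the dense subspace of finitely supported sequences, establish the bound there, and extend by continuity. To get the bound, I would pair $T\{h_i\}$ against an arbitrary $z\in\z$: for a finitely supported sequence,
\begin{align*}
\left\vert\left\langle \sum_{i} \flat(h_i,x_i),z\right\rangle_{\z}\right\vert
&= \left\vert\sum_{i}\left\langle \flat(h_i,x_i),z\right\rangle_{\z}\right\vert
= \left\vert\sum_{i}\left\langle h_i,\left\langle z/x_i\right\rangle\right\rangle_{\h}\right\vert\\
&\leq \left(\sum_{i}\Vert h_i\Vert_{\h}^2\right)^{1/2}\left(\sum_{i}\Vert\langle z/x_i\rangle\Vert_{\h}^2\right)^{1/2}
\leq \sqrt{B}\,\Vert\{h_i\}\Vert_{l_2(\h)}\,\Vert z\Vert_{\z},
\end{align*}
using the Cauchy–Schwarz inequality in $l_2(\h)$ together with the upper $K$-$\flat$-Bessel bound on the last factor. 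Taking the supremum over $z$ with $\Vert z\Vert_{\z}\leq 1$ gives $\Vert T\{h_i\}\Vert_{\z}\leq\sqrt{B}\Vert\{h_i\}\Vert_{l_2(\h)}$, hence $T$ extends to a bounded operator with $\Vert T\Vert\leq\sqrt{B}$, and the series $\sum_i\flat(h_i,x_i)$ converges for every $\{h_i\}\in l_2(\h)$.

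\textbf{Computing the adjoint.} For the adjoint formula, I would take $\{h_i\}\in l_2(\h)$ and $z\in\z$ and compute
\begin{align*}
\left\langle T\{h_i\},z\right\rangle_{\z}
= \left\langle \sum_{i}\flat(h_i,x_i),z\right\rangle_{\z}
= \sum_{i}\left\langle \flat(h_i,x_i),z\right\rangle_{\z}
= \sum_{i}\left\langle h_i,\langle z/x_i\rangle\right\rangle_{\h}
= \left\langle \{h_i\},\{\langle z/x_i\rangle\}\right\rangle_{l_2(\h)},
\end{align*}
where interchanging the sum and the inner product is justified by the continuity of $\langle\cdot,z\rangle_{\z}$ together with convergence of the series. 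Since this holds for all $\{h_i\}$, by definition of the adjoint $T^*z=\{\langle z/x_i\rangle\}_{i\in\N}$; note that $T^*z\in l_2(\h)$ precisely because of the upper Bessel bound, so this also re-confirms that the right-hand inequality in the Besselian condition is exactly what makes $T^*$ (equivalently $T$) bounded.

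\textbf{The converse and the closed range issue.} For the converse direction, if $T$ is well defined and bounded with $\Vert T\Vert\leq\sqrt{B}$, then $\Vert T^*\Vert\leq\sqrt{B}$ by Theorem~\ref{prop de T}(i), and since $T^*z=\{\langle z/x_i\rangle\}$ we get $\sum_i\Vert\langle z/x_i\rangle\Vert_{\h}^2=\Vert T^*z\Vert_{l_2(\h)}^2\leq B\Vert z\Vert_{\z}^2$, i.e. the upper $K$-$\flat$-Bessel bound. The one point that deserves care — and which I expect to be the main obstacle — is the claim that $T$ has closed range, since being Besselian with an \emph{upper} bound only says nothing a priori about surjectivity or closedness of $\ran(T)$. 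I suspect the intended justification is that the $K$-$\flat$-frame lower bound $A\Vert K^*z\Vert_{\z}^2\leq\sum_i\Vert\langle z/x_i\rangle\Vert_{\h}^2=\Vert T^*z\Vert^2$ forces $\ran(T^*)\supset\ran(K)$-type control via Theorem~\ref{douglas}, and then closedness of $\ran(T)$ follows from closedness of $\ran(T^*)$ by Theorem~\ref{prop de T}(ii); but strictly under only the Bessel (upper) hypothesis one would need $T$ to be a quotient map onto $\overline{\ran(T)}=\overline{\mathrm{span}}\{\flat(h,x_i)\}$ and invoke that a bounded operator between Hilbert spaces always has closed range onto the closure of its range — so I would either restrict the closed-range assertion to the genuine $K$-$\flat$-frame case or interpret "closed range" as $T$ viewed as a map onto $\overline{\ran(T)}$. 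I would flag this and proceed with the $K$-$\flat$-frame hypothesis when invoking Douglas' theorem.
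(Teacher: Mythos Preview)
Your argument matches the paper's almost line for line: both directions use the same duality computation $\langle \flat(h_i,x_i),z\rangle_{\z}=\langle h_i,\langle z/x_i\rangle\rangle_{\h}$ followed by Cauchy--Schwarz in $l_2(\h)$ to bound $T$, and the same pairing identity to read off $T^*$. The paper phrases the well-definedness step as a Cauchy-sequence argument on the partial sums rather than as ``bound on a dense subspace and extend,'' but the estimate is identical.

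Your instinct about the closed-range claim is exactly right, and you should not try to manufacture a proof of it: the paper's own proof never establishes that $\ran(T)$ is closed, and indeed no such conclusion can be drawn from the upper Bessel bound alone (take any classical Bessel sequence that is not a frame to see $T^*$ need not be bounded below, hence $\ran(T)$ need not be closed). So the ``closed range'' clause in the statement is simply not supported by the proof the paper gives; your flagged caveat is the correct reading, and the remainder of your argument is complete as written.
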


\begin{proof}
Let $\{x_i\}_{i\in\N}$ be $K$-$\flat$-Besselian in $\z$ with bound $B$. The serie $\sum_{i\in\N} \flat(h_i,x_i)$ converges for every $\{h_i\}_{i\in\N}\in l_2(\h)$. Let $n,m\in \N$, and set $z_n=\sum_{i=1}^n \flat(h_i,x_i)$, we have

\begin{align*}
\Vert z_m -z_n\Vert_{\z} &= \sup_{\Vert z\Vert=1}\left\vert\left\langle \sum_{i=n+1}^m \flat(h_i,x_i),z\right\rangle_{\z}\right\vert\\
&=\sup_{\Vert z\Vert=1}\left\vert \sum_{i=n+1}^m\left\langle h_i,\langle z/x_i\right\rangle\rangle_{\h}\right\vert\\
&\leqslant \sup_{\Vert z\Vert=1} \sum_{i=n+1}^m \Vert h_i\Vert_{\h}\Vert\langle z/x_i\rangle\Vert_{\h}\\
&\leqslant \sup_{\Vert z\Vert=1} \left( \sum_{i=n+1}^m \Vert \langle z/ x_i\rangle\Vert^2_{\h}\right)^{\frac{1}{2}}\left( \sum_{i=n+1}^m \Vert h_i\Vert^2_{\h}\right)^{\frac{1}{2}}\leqslant \sqrt{B}\left(\sum_{i=n+1}^m \Vert h_i\Vert^2_{\h}\right)^{\frac{1}{2}}.
\end{align*}
Note that $\left(\sum_{i=n+1}^m \Vert h_i\Vert^2_{\h}\right)^{\frac{1}{2}}$ tends to $0$ when $n$ tends to $\infty$, hence $\{z_n\}_{n\in\N}$ verifies Cauchy criterion, then it converges. We claim that the operator $T$ is well defined, and $\Vert T\Vert\leq \sqrt{B}$.\\
Conversely, suppose that $T$ is well defined, and that $\Vert T\Vert \leq \sqrt{B}$. Let $\{h_i\}_{i\in\N} \subset l_2(\h)$ and $z\in\z$, we have
\begin{align*}
\left\langle T(\{h_i\}_{i\in\N}), z\right\rangle_{\z} &=\sum_{i\in\N} \langle \flat(h_i,x_i),z\rangle_{\z}\\
&=\sum_{i\in\N}\langle h_i, \langle z/x_i\rangle\rangle_{\h}\\
&=\left\langle\{h_i\}_{i\in\N}, \{\langle z/ x_i\rangle\}_{i\in\N}\right\rangle_2.
\end{align*}
which gives us the expression of the adjoint operator $$T^*(z)= \{\langle z/ x_i\rangle\}_{i\in\N}, ~~~\forall z\in\z.$$
We have 
\begin{align*}
\sum_{i\in\N} \Vert \langle z/ x_i\rangle\Vert^2_{\h} &= \Vert T^*(z)\Vert^2_2 \leq \Vert T^*\Vert^2\Vert z\Vert^2_{\z}\\
&=\Vert T\Vert^2\Vert z\Vert^2_{\z}\leq B\Vert z\Vert^2_{\z}.
\end{align*}
hence, $\{x_i\}_{i\in\N}$ is $K$-$\flat$-Besselian.
\end{proof}

$T$ is called $K$-$\flat$-synthesis operator, and $T^*$ is called $K$-$\flat$-analysis operator.

\begin{defi}
Suppose that $\{x_i\}_{i\in\N} \subset \B$ is a $K$-$\flat$-frame for $\z$ with bounds $A$ and $B$, then the operator $S: \z \longrightarrow \z$ defined by:
\begin{equation}
S(z)=\sum_{i\in\N} \flat\left(\langle z/x_i\rangle, x_i\right), ~~~\forall z \in \z.
\end{equation}
is called $K$-$\flat$-frame operator for $\{x_i\}_{i\in\N}$, and $S=TT^*$.
\end{defi}

\begin{thm}
Let $\{x_i\}_{i\in\N} \subset \B$ be a $K$-$\flat$-frame for $\z$ with bounds $A,B$ and with $K$-$\flat$-frame operator $S$. Then $S$ is bounded, positive, and self-adjoint in $\z$.
\end{thm}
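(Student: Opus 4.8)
The plan is to exploit the factorization $S=TT^*$ recorded in the definition of the $K$-$\flat$-frame operator, together with the structural properties of the synthesis operator $T$ supplied by Theorem~\ref{bessel=Tdef}. Since a $K$-$\flat$-frame for $\z$ with bounds $A,B$ is in particular $K$-$\flat$-Besselian in $\z$ with bound $B$, Theorem~\ref{bessel=Tdef} gives that $T:l_2(\h)\to\z$ is well defined, bounded with $\Vert T\Vert\le\sqrt{B}$, has closed range, and that its adjoint is $T^*(z)=\{\langle z/x_i\rangle\}_{i\in\N}$. Thus $S$ is literally $T\circ T^*$, and the three assertions will each follow from one standard manipulation.

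First I would settle boundedness: $S=TT^*$ is the composition of the bounded operators $T^*\in\el(\z,l_2(\h))$ and $T\in\el(l_2(\h),\z)$, hence $S\in\el(\z)$ with $\Vert S\Vert\le\Vert T\Vert\,\Vert T^*\Vert=\Vert T\Vert^2\le B$, where $\Vert T^*\Vert=\Vert T\Vert$ comes from part i) of Theorem~\ref{prop de T}. Next, self-adjointness is immediate from $(TT^*)^*=T^{**}T^*=TT^*$, valid for any bounded operator between Hilbert spaces, so $S^*=S$. For positivity, I would compute, for an arbitrary $z\in\z$,
$$\langle Sz,z\rangle_{\z}=\langle TT^*z,z\rangle_{\z}=\langle T^*z,T^*z\rangle_{2}=\Vert T^*z\Vert_{2}^{2}=\sum_{i\in\N}\Vert\langle z/x_i\rangle\Vert_{\h}^{2}\geqslant 0,$$
which proves $S\geqslant 0$; one may additionally note $\langle Sz,z\rangle_{\z}\geqslant A\Vert K^*z\Vert_{\z}^{2}$, reflecting the lower $K$-$\flat$-frame inequality.

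The only point that requires a small amount of care — rather than a genuine obstacle — is justifying that the series $S(z)=\sum_{i\in\N}\flat(\langle z/x_i\rangle,x_i)$ actually converges and coincides with $T(T^*z)$. This holds because $T^*z=\{\langle z/x_i\rangle\}_{i\in\N}$ lies in $l_2(\h)$ (indeed $\sum_{i\in\N}\Vert\langle z/x_i\rangle\Vert_{\h}^2\le B\Vert z\Vert_{\z}^2<\infty$), and $T$ is defined on all of $l_2(\h)$ by $T(\{h_i\}_{i\in\N})=\sum_{i\in\N}\flat(h_i,x_i)$, so $T(T^*z)=\sum_{i\in\N}\flat(\langle z/x_i\rangle,x_i)=S(z)$. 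Once $S=TT^*$ is granted, the remainder is the three-line computation above.
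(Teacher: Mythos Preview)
Your proof is correct and follows essentially the same approach as the paper: both use the factorization $S=TT^*$ to get self-adjointness via $(TT^*)^*=TT^*$, and both compute $\langle Sz,z\rangle_{\z}=\sum_{i\in\N}\Vert\langle z/x_i\rangle\Vert_{\h}^2$ for positivity. The only cosmetic difference is that the paper records boundedness as the operator inequality $AKK^*\le S\le B\,Id_{\z}$ rather than your norm estimate $\Vert S\Vert\le B$, and your extra paragraph justifying convergence of $\sum_{i}\flat(\langle z/x_i\rangle,x_i)$ and the identification $S=T\circ T^*$ is a welcome bit of care that the paper leaves implicit.
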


\begin{proof}
for all $z \in \z$, 
\begin{equation}\label{s=fr}
\langle Sz, z\rangle_{\z}=\left\langle \sum_{i\in\N} \flat\left(\langle z/x_i\rangle, x_i\right),z\right\rangle_{\z}= \sum_{i\in\N} \left\langle\flat\left(\langle z/x_i\rangle, x_i\right),z\right\rangle_{\z}=\sum_{i\in\N} \Vert\langle z/x_i\rangle\Vert^2_{\h};
\end{equation}
which means that $S$ is positive, and as $S=TT^*$, $S^*=(TT^*)^*=TT^*$, so $S$ is self-adjoint.\\
Finally, if we gather (\ref{k-b-frame}) and (\ref{s=fr}) we obtain that, $\exists 0<A\leq B<\infty$, such that
$$A\langle K^*z,K^*z\rangle_{\z}=A\langle KK^*z,z\rangle_{\z}\leq \langle Sz,z\rangle_{\z}\leq B\langle z,z\rangle_{\z}, ~~ \forall z \in \ran(K),$$
hence $$AKK^*\leq S\leq BI_{\z}.$$
\end{proof}

\begin{pro}\label{Kclosedran}
Let $\{x_i\}_{i\in\N} \subset \B$ be a $K$-$\flat$-frame for $\z$ with bounds $A$ and $B$. If $\ran(K)$ is closed, then $S$ is invertible on $\ran(K)$, and moreover $$B^{-1}\leq \Vert S^{-1}\Vert\leqslant A^{-1}\Vert K^{\dagger}\Vert^2.$$
\end{pro}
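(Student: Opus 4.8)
The plan is to read everything off from the two facts already established for the $K$-$\flat$-frame operator $S$: it is bounded, positive and self-adjoint, and it satisfies $AKK^{*}\leq S\leq BI_{\z}$; in particular $\Vert S\Vert\leq B$. The real work is to show that $S$ is bounded below on $\ran(K)$ by the explicit constant $A\Vert K^{\dagger}\Vert^{-2}$. Once that is in hand, set $m=\inf\{\Vert Sz\Vert_{\z}/\Vert z\Vert_{\z}:z\in\ran(K)\setminus\{0\}\}$; then $S|_{\ran(K)}$ is invertible onto its image with $\Vert S^{-1}\Vert=m^{-1}$, and the two-sided estimate follows at once.

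First I would record a pseudo-inverse estimate. Since $\ran(K)$ is closed, $K^{\dagger}$ is a bounded operator and $KK^{\dagger}z=z$ for every $z\in\ran(K)$. Hence, for such $z$, $\Vert z\Vert_{\z}^{2}=\langle KK^{\dagger}z,z\rangle_{\z}=\langle K^{\dagger}z,K^{*}z\rangle_{\z}\leq\Vert K^{\dagger}\Vert\,\Vert z\Vert_{\z}\,\Vert K^{*}z\Vert_{\z}$, so that $\Vert K^{*}z\Vert_{\z}\geq\Vert K^{\dagger}\Vert^{-1}\Vert z\Vert_{\z}$ on $\ran(K)$. Next, for $z\in\ran(K)$ I combine the lower $K$-$\flat$-frame inequality, i.e. $\langle Sz,z\rangle_{\z}=\sum_{i}\Vert\langle z/x_{i}\rangle\Vert_{\h}^{2}\geq A\Vert K^{*}z\Vert_{\z}^{2}$, with the previous estimate and Cauchy--Schwarz to obtain $\Vert Sz\Vert_{\z}\,\Vert z\Vert_{\z}\geq\langle Sz,z\rangle_{\z}\geq A\Vert K^{*}z\Vert_{\z}^{2}\geq A\Vert K^{\dagger}\Vert^{-2}\Vert z\Vert_{\z}^{2}$, hence $\Vert Sz\Vert_{\z}\geq A\Vert K^{\dagger}\Vert^{-2}\Vert z\Vert_{\z}$ for every $z\in\ran(K)$. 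Since $A>0$ and $\Vert K^{\dagger}\Vert<\infty$, this makes $S|_{\ran(K)}$ injective, and a routine Cauchy-sequence argument (using this lower bound together with the closedness of $\ran(K)$) shows that $S(\ran(K))$ is closed; thus $S|_{\ran(K)}\colon\ran(K)\to S(\ran(K))$ is a bijection with bounded inverse, which is the sense in which $S$ is invertible on $\ran(K)$.

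For the norm estimates, the lower bound just proved gives $m\geq A\Vert K^{\dagger}\Vert^{-2}$, hence $\Vert S^{-1}\Vert=m^{-1}\leq A^{-1}\Vert K^{\dagger}\Vert^{2}$. Conversely, $\Vert Sz\Vert_{\z}\leq\Vert S\Vert\,\Vert z\Vert_{\z}\leq B\Vert z\Vert_{\z}$ for all $z$, so $m\leq B$ and therefore $\Vert S^{-1}\Vert=m^{-1}\geq B^{-1}$, which closes the argument.

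The only genuinely nontrivial point is the pseudo-inverse estimate: one has to notice that the right move is to use $KK^{\dagger}$ as a projector fixing $\ran(K)$ and to split the inner product $\langle KK^{\dagger}z,z\rangle_{\z}$ so as to bring $K^{*}$ into play; this is also exactly where the hypothesis that $\ran(K)$ be closed is used, since it is what guarantees $\Vert K^{\dagger}\Vert<\infty$. Everything else is standard manipulation of positive self-adjoint operators, and one should only be slightly careful that invertibility of $S$ on $\ran(K)$ means a bijection onto $S(\ran(K))$, which in general is a proper closed subspace of $\z$ and need not coincide with $\ran(K)$ itself.
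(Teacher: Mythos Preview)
Your proof is correct and follows essentially the same route as the paper's: both derive the key lower bound $\Vert Sz\Vert_{\z}\geq A\Vert K^{\dagger}\Vert^{-2}\Vert z\Vert_{\z}$ on $\ran(K)$ by combining $\langle Sz,z\rangle_{\z}\geq A\Vert K^{*}z\Vert_{\z}^{2}$ with a pseudo-inverse estimate and Cauchy--Schwarz, and then read off the two-sided bound on $\Vert S^{-1}\Vert$. The only cosmetic difference is that the paper obtains $\Vert K^{*}z\Vert_{\z}\geq\Vert K^{\dagger}\Vert^{-1}\Vert z\Vert_{\z}$ via $\Vert z\Vert_{\z}=\Vert(K^{\dagger})^{*}K^{*}z\Vert_{\z}$ rather than via the inner-product splitting $\langle KK^{\dagger}z,z\rangle_{\z}=\langle K^{\dagger}z,K^{*}z\rangle_{\z}$, and your treatment of what ``invertible on $\ran(K)$'' means is in fact more careful than the paper's.
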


\begin{proof}
Suppose that $\ran(K)$ is closed, then there exists $K^{\dagger} \in \el(\z)$ such that 
$$ KK^{\dagger}z=z,~~~\forall z \in\ran(K),$$ and by (\ref{s=fr}) we have
$$\langle Sz, z\rangle_{\z}=\sum_{i\in\N} \Vert\langle z/x_i\rangle\Vert^2_{\h} \geq A\Vert K^*z\Vert_{\z}^2.$$
And also $\langle Sz, z\rangle_{\z}\leq \Vert Sz\Vert_{\z}\Vert z\Vert_{\z}$, thus $A\Vert K^*z\Vert_{\z}^2\leq \Vert Sz\Vert_{\z}\Vert z\Vert_{\z}$.\\ In the other hand one has $$\Vert z\Vert_{\z} = \Vert KK^{\dagger}z\Vert_{\z}
=\Vert (K^{\dagger})^*K^*z\Vert_{\z},~~ (since~(KK^{\dagger})^*=Id_{\ran(K)}).$$

Then we obtain for all $z\in \ran(K)$,

$$\Vert z\Vert_{\z}^2 \leq \Vert (K^{\dagger})^*\Vert^2 \Vert K^*z\Vert_{\z}^2=\Vert K^{\dagger}\Vert^2 \Vert K^*z\Vert_{\z}^2,$$
$\Rightarrow$
$$ A \Vert K^{\dagger}\Vert^{-2} \Vert z\Vert_{\z}^2 \leq A \Vert K^*z\Vert_{\z}^2 \leq \Vert Sz\Vert_{\z}\Vert z\Vert_{\z},$$
$\Rightarrow$
$$ A \Vert K^{\dagger}\Vert^{-2}\Vert z\Vert_{\z} \leq \Vert Sz\Vert_{\z}.$$
hence by iii) of theorem \ref{prop de T}, $S^*$ is surjective, and $S$ is injective, but as $S$ is self-adjoint, then $S$ is invertible on $\ran(K)$. We also have $$\Vert SS^{-1}z\Vert_{\z}=\Vert z\Vert_{\z}\leqslant  B\Vert S^{-1}z\Vert_{\z}\leqslant  B\Vert S^{-1}\Vert\Vert z\Vert_{\z} \Rightarrow \Vert S^{-1}\Vert \geq  B^{-1}.$$
Finally we have shown that 
$$A \Vert K^{\dagger}\Vert_{\el(\z)}^{-2}\Vert z\Vert_{\z} \leq \Vert Sz\Vert_{\z},$$
 hence $$A \Vert K^{\dagger}\Vert^{-2}\Vert S^{-1}z\Vert_{\z} \leq \Vert SS^{-1}z\Vert_{\z}.$$
  We conclude that $$\Vert S^{-1}\Vert\leqslant A^{-1}\Vert K^{\dagger}\Vert^2,~~\forall z\in\ran(K).$$
\end{proof}
\begin{rmq}
When $S$ is invertible on $\ran(K)$ we have 
$$SS^{-1}(z)=S^{-1}S(z)=z,~~ (\forall z\in\ran(K)).$$
$\Leftrightarrow$
$$z=\sum_{i\in\N} \flat\left(\left\langle S^{-1}(z)/x_i\right\rangle,x_i\right),~~ (\forall z\in\ran(K)).$$
$\Leftrightarrow$
$$z=\sum_{i\in\N} S^{-1}\left(\flat(\langle z/x_i\rangle,x_i\right),~~ (\forall z\in\ran(K)).$$
In the case where $\{x_i\}_{i\in\N}$ is a $\flat$-frame, then 
$$z=\sum_{i\in\N} \flat\left(\left\langle S^{-1}(z)/x_i\right\rangle,x_i\right)=\sum_{i\in\N} S^{-1}\left(\flat(\langle z/x_i\rangle,x_i\right),~~ (\forall z\in\z).$$
\end{rmq}

\begin{thm}\label{Tdefsurj=kbframe}
Let $K \in \el(\z)$ has closed range. The sequence $\{x_i\}_{i\in\N} \subset \B$ forms a $K$-$\flat$-frame for $\ran(K)$ if and only if the operator,
\begin{align*}
T: l_2(\h) &\rightarrow \z\\
\{h_i\}_{i\in\N} &\mapsto \sum_{i\in\N} \flat(h_i,x_i)
\end{align*}
is defined, bounded and surjective on $\ran(K)$.
\end{thm}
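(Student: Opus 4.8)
The plan is to recognise this statement as the $\flat$-analogue of the classical characterisation of $K$-frames through their synthesis operator, and to reduce both implications to Theorem~\ref{bessel=Tdef} (the Besselian$\leftrightarrow$$T$ dictionary) combined with Douglas' factorisation theorem (Theorem~\ref{douglas}).

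First I would dispose of the Bessel (right-hand) part, which is symmetric in both directions. If $\{x_i\}_{i\in\N}$ is a $K$-$\flat$-frame for $\ran(K)$, then in particular it satisfies the right inequality of (\ref{k-b-frame}), hence it is $\flat$-Besselian; by Theorem~\ref{bessel=Tdef} the operator $T$ is well defined, bounded, $\Vert T\Vert\le\sqrt{B}$, and $T^*z=\{\langle z/x_i\rangle\}_{i\in\N}$ for all $z\in\z$. Conversely, if $T$ is well defined and bounded, the same identity $T^*z=\{\langle z/x_i\rangle\}_{i\in\N}$ holds and yields $\sum_{i\in\N}\Vert\langle z/x_i\rangle\Vert_{\h}^2=\Vert T^*z\Vert_2^2\le\Vert T\Vert^2\Vert z\Vert_{\z}^2$, i.e. the Bessel bound with $B=\Vert T\Vert^2$. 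So in both directions the only real content is the equivalence between the lower bound $A\Vert K^*z\Vert_{\z}^2\le\sum_{i\in\N}\Vert\langle z/x_i\rangle\Vert_{\h}^2$ and surjectivity of $T$ onto $\ran(K)$.

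For the forward implication, the lower frame inequality together with $T^*z=\{\langle z/x_i\rangle\}_{i\in\N}$ gives $A\Vert K^*z\Vert_{\z}^2\le\Vert T^*z\Vert_2^2$ for every $z\in\z$, that is $A\langle KK^*z,z\rangle_{\z}\le\langle TT^*z,z\rangle_{\z}$, so $KK^*\le\lambda^2 TT^*$ with $\lambda=A^{-1/2}$ as positive operators on $\z$. Applying Theorem~\ref{douglas} with $S=K\in\el(\z)$ and $T\in\el(l_2(\h),\z)$ (so that $SS^*\le\lambda^2TT^*\Leftrightarrow\ran(S)\subset\ran(T)$) yields $\ran(K)\subset\ran(T)$, which is precisely the statement that $T$ is surjective on $\ran(K)$; here the hypothesis that $\ran(K)$ is closed is what makes $\ran(K)$ a genuine Hilbert subspace, consistent with the earlier results. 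For the converse, assume $T$ is well defined, bounded and surjective on $\ran(K)$, i.e. $\ran(K)\subset\ran(T)$. Then Theorem~\ref{douglas} furnishes $\lambda>0$ with $KK^*\le\lambda^2TT^*$, whence $\Vert K^*z\Vert_{\z}^2=\langle KK^*z,z\rangle_{\z}\le\lambda^2\langle TT^*z,z\rangle_{\z}=\lambda^2\sum_{i\in\N}\Vert\langle z/x_i\rangle\Vert_{\h}^2$ for all $z\in\z$; combined with the Bessel estimate above, (\ref{k-b-frame}) holds with $A=\lambda^{-2}$ and $B=\Vert T\Vert^2$, so $\{x_i\}_{i\in\N}$ is a $K$-$\flat$-frame for $\ran(K)$. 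The only point requiring care is the translation between the operator inequality and the pointwise norm inequality $\Vert K^*z\Vert_{\z}\le\lambda\Vert T^*z\Vert_2$ (immediate from $\langle MM^*z,z\rangle=\Vert M^*z\Vert^2$) and the correct placement of $S$ and $T$ in the slots of Theorem~\ref{douglas}; once those are fixed the argument is routine.
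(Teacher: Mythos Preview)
Your argument is correct, and in fact cleaner than the paper's, but it is a genuinely different route.

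For the forward direction the paper does not invoke Douglas' theorem; instead it quotes Proposition~\ref{Kclosedran} (which uses the closed-range hypothesis on $K$) to assert that the frame operator $S=TT^*$ is invertible on $\ran(K)$, and then deduces from $S=TT^*$ that $T$ itself is onto $\ran(K)$. For the converse the paper again avoids Douglas and works by hand: given $z\in\ran(K)$ it writes $z=T(\{y_i\})$ with $\{y_i\}=T^{\dagger}z$, expands $\Vert K^*z\Vert^4=\vert\langle T(\{y_i\}),KK^*z\rangle\vert^2$, and after Cauchy--Schwarz and the bounds $\Vert z\Vert\le\Vert K^{\dagger}\Vert\,\Vert K^*z\Vert$, $\Vert\{y_i\}\Vert_{l_2(\h)}\le\Vert T^{\dagger}\Vert\,\Vert z\Vert$, arrives at the lower frame inequality with the explicit constant $A=\Vert K\Vert^{-4}\Vert T^{\dagger}\Vert^{-2}\Vert K^{\dagger}\Vert^{-2}$, valid for $z\in\ran(K)$.

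What each approach buys: the paper's computation produces a concrete lower bound in terms of the pseudo-inverses $K^{\dagger}$ and $T^{\dagger}$, at the cost of a longer chain of estimates and a conclusion stated only for $z\in\ran(K)$. Your Douglas-based argument is shorter, uses a theorem already quoted in the preliminaries (Theorem~\ref{douglas}) in both directions, and yields the lower inequality for every $z\in\z$ in accordance with Definition~\ref{k-b-frame}; the trade-off is that the constant $A=\lambda^{-2}$ is not given explicitly in terms of $K^{\dagger}$ and $T^{\dagger}$.
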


\begin{proof}
Let $\{x_i\}_{i\in\N} \subset \B$ be a $K$-$\flat$-frame for $\z$,  then it is $K$-$\flat$-Besselian in $\z$. Hence, by theorem \ref{bessel=Tdef} $T$ is well defined, and let $S=TT^*$ be the corresponding $K$-$\flat$- frame operator, we proved that $S$ is surjective on $\ran(K)$, then $T$ is also surjective on $\ran(K)$.\\
Conversely, suppose that $T$ is defined and surjective, then by theorem \ref{bessel=Tdef} $\{x_i\}_{i\in\N}$ is $K$-$\flat$-Besselian, and also $T$ is surjective, then for each $z \in \ran(K)$, $\exists \{y_i\}_{i\in\N}\subset l_2(\h)$ such that $T(\{y_i\})=z$, and let $T^{\dagger}z=\{y_i\}$
\begin{align*}
\Vert K^*z\Vert^4_{\z} = \vert\langle K^*z,K^*z\rangle_{\z}\vert^2 =\vert\langle z,KK^*z\rangle_{\z}\vert^2 &= \left\vert\langle T(\{y_i\}),KK^*z\rangle_{\z}\right\vert^2\\
&= \left\vert\left\langle \sum_{i=1}^{\infty} b(y_i,x_i),KK^*z\right\rangle_{\z}\right\vert^2\\
&\leqslant\Vert K\Vert^4\left\vert\sum_{i=1}^{\infty}\left\langle y_i ,\langle z/x_i\rangle \right\rangle_{\h}\right\vert^2_{\h}\\
&\leqslant\Vert K\Vert^4\left\vert \sum_{i=1}^{\infty} \Vert y_i\Vert_{\h}\Vert \langle z/x_i\rangle\Vert_{\h}\right\vert^2\\
&\leqslant\Vert K\Vert^4\Vert T^{\dagger}z\Vert^2_{\ell^2(\h)}\sum_{i=1}^{\infty}\Vert \langle z/x_i\rangle\Vert^2_{\h}\\
&\leqslant \Vert K\Vert^4\Vert T^{\dagger}\Vert^2\Vert z\Vert_{\z}^2\sum_{i=1}^{\infty}\Vert\langle z/x_i\rangle\Vert^2_{\h}\\
&\leqslant\Vert K\Vert^4\Vert T^{\dagger}\Vert^2\Vert K^{\dagger}\Vert^2\Vert K^*z\Vert_{\z}^2\sum_{i=1}^{\infty}\Vert\langle z/x_i\rangle\Vert^2_{\h}~~ \forall z \in \ran(K).
\end{align*}
Hence \begin{equation}\label{equa}
\Vert K\Vert^{-4}\Vert T^{\dagger}\Vert^{-2}\Vert K^{\dagger}\Vert^{-2}\Vert K^*z\Vert^2_{\z}\leqslant \sum_{i=1}^{\infty}\Vert\langle z/x_i\rangle\Vert^2_{\h},~~\forall z\in\ran(K).
\end{equation}
Hence the proof holds.
\end{proof}

\begin{thm}\label{b-f=b-orthbase}
Let $K\in\el(\z)$, such that $\ran(K)$ is closed. A sequence $\{x_i\}_{i\in\N}\subset\B$ is a $K$-$\flat$-frame for $\ran(K)$ with bounds $A$ and $B$, if and only if the two following conditions
are satisfied:
\begin{enumerate}
\item[1)]$\{x_i\}_{i\in\N}$ is $\flat$-complete in $\ran(K)$.
\item[2)] The $K$-$\flat$-synthesis operator is well defined and 
\begin{equation}
\frac{A}{\Vert K^{\dagger}\Vert^2}\sum_{i\in\N}^{\infty}\Vert h_i\Vert^2_{\h}\leqslant \Vert T(\{h_i\}_{i\in\N})\Vert^2_{\z}\leqslant B \sum_{i\in\N}^{\infty}\Vert h_i\Vert^2_{\h}, ~~~\forall \{h_i\}_{i\in\N}\in\h.
\end{equation}
\end{enumerate}
\end{thm}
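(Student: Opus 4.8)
The statement is an equivalence, so I would prove the two implications separately, using the synthesis operator $T\colon\ell_2(\h)\to\z$ of Theorem~\ref{bessel=Tdef} (with adjoint $T^*z=\{\langle z/x_i\rangle\}_{i\in\N}$) and the pseudo‑inverse machinery for $K$ and $T$ as the common bridge.

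First, suppose $\{x_i\}_{i\in\N}$ is a $K$-$\flat$-frame for $\ran(K)$ with bounds $A,B$. For the completeness claim, let $z\in\ran(K)$ with $\langle z/x_i\rangle=0$ for every $i$; the lower $K$-$\flat$-frame inequality forces $A\Vert K^*z\Vert_{\z}^2\le 0$, hence $z\in\ker(K^*)=\ran(K)^{\perp}$, and since $\ran(K)$ is closed we get $z\in\ran(K)\cap\ran(K)^{\perp}=\{0\}$, which is exactly $\flat$-completeness in $\ran(K)$ (Definition~\ref{complisdegen}). For condition (2), the upper $K$-$\flat$-frame inequality is the $K$-$\flat$-Besselian condition with bound $B$, so Theorem~\ref{bessel=Tdef} gives that $T$ is well defined, has closed range, and $\Vert T\Vert\le\sqrt B$; hence $\Vert T(\{h_i\})\Vert_{\z}^2\le\Vert T\Vert^2\sum_i\Vert h_i\Vert_{\h}^2\le B\sum_i\Vert h_i\Vert_{\h}^2$, the right‑hand inequality of (2). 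For the left‑hand one I would use Theorem~\ref{Tdefsurj=kbframe} (so $T$ is surjective onto $\ran(K)$) and Proposition~\ref{Kclosedran} (so $S=TT^*$ is invertible on $\ran(K)$ with $\Vert S^{-1}\Vert\le A^{-1}\Vert K^{\dagger}\Vert^{2}$); since $\langle Sz,z\rangle_{\z}=\Vert T^*z\Vert_2^2$, this yields $\Vert T^*z\Vert_2^2\ge \tfrac{A}{\Vert K^{\dagger}\Vert^2}\Vert z\Vert_{\z}^2$ on $\ran(K)$, i.e.\ $T^*$ is bounded below there, and transporting this estimate through $T$ on $\ker(T)^{\perp}=\overline{\ran(T^*)}$ (closed, by Theorem~\ref{prop de T}~ii)) produces $\tfrac{A}{\Vert K^{\dagger}\Vert^{2}}\sum_i\Vert h_i\Vert_{\h}^2\le\Vert T(\{h_i\})\Vert_{\z}^2$.

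Conversely, assume (1) and (2). The right‑hand inequality of (2) gives $\Vert T\Vert\le\sqrt B$, so by Theorem~\ref{bessel=Tdef} $T$ is well defined, $T^*z=\{\langle z/x_i\rangle\}_{i\in\N}$, and $\{x_i\}_{i\in\N}$ is $K$-$\flat$-Besselian with bound $B$; this already gives the upper half of the $K$-$\flat$-frame inequality. The left‑hand inequality of (2) shows $T$ is bounded below, hence injective with closed range, so $\ker(T^*)=\ran(T)^{\perp}$ with $\ran(T)$ closed; the completeness in (1) reads $\ker(T^*)\cap\ran(K)=\{0\}$, and together with closedness one checks $\ran(K)\subseteq\ran(T)$, i.e.\ $T$ is surjective onto $\ran(K)$. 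Theorem~\ref{Tdefsurj=kbframe} then yields that $\{x_i\}_{i\in\N}$ is a $K$-$\flat$-frame for $\ran(K)$, and tracking the constants through the estimate obtained in the proof of that theorem (equation~\eqref{equa}, with $\Vert T^{\dagger}\Vert$ controlled by the lower bound in (2)) gives a valid lower bound; after the usual rescaling one recovers the pair $A,B$.

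The delicate point in both directions is the same: converting a lower \emph{frame} bound — a statement about $T^*$ restricted to $\ran(K)$ — into a bounded‑below statement for the \emph{synthesis} operator $T$, and conversely. This requires restricting to $\ker(T)^{\perp}=\overline{\ran(T^*)}$, handling the two pseudo‑inverses $K^{\dagger}$ and $T^{\dagger}$ simultaneously, and verifying the inclusion $\ran(K)\subseteq\ran(T)$; the purpose of hypothesis~(1) is precisely to guarantee that inclusion, so that surjectivity of $T$ onto $\ran(K)$ — and hence Theorem~\ref{Tdefsurj=kbframe} — can be invoked. I expect this transfer between $T$ and $T^*$, and the bookkeeping of the constant $\Vert K^{\dagger}\Vert$, to be the main obstacle; everything else is a direct application of the earlier results.
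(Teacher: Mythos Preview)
Your overall architecture matches the paper's: derive $\flat$-completeness from $K^*z=0$ via the lower frame inequality, use Theorem~\ref{bessel=Tdef} for the Bessel/upper part of~(2), and for the converse reduce to Theorem~\ref{Tdefsurj=kbframe} by showing $T$ is surjective onto $\ran(K)$ from the $\flat$-completeness (the paper argues, exactly as you do, that $T^*z=0$ forces $z=0$ on $\ran(K)$; your version, which also invokes the closed-range information coming from the lower bound in~(2), is in fact more careful about why injectivity of $T^*$ yields surjectivity of $T$).

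The one genuine divergence is how the \emph{lower} inequality in~(2) is obtained in the forward direction. You go through Proposition~\ref{Kclosedran} to bound $S^{-1}$, deduce a lower bound for $T^*$ on $\ran(K)$, and then transfer it to $T$ on $\ker(T)^\perp$. The paper takes a shortcut: it invokes Lemma~\ref{lem}, the Parseval-type identity $\sum_i\Vert h_i\Vert_{\h}^2=\Vert z\Vert_{\z}^2$ for $z=T(\{h_i\})$, to identify $\Vert T(\{h_i\})\Vert_{\z}^2$ with $\sum_i\Vert h_i\Vert_{\h}^2$ directly, after which the frame inequality literally becomes the two-sided bound in~(2). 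This is shorter and delivers the exact constants, but it silently uses the hypothesis of Lemma~\ref{lem} (that $\{x_i\}$ is a $\flat$-orthonormal basis), which is not assumed in the theorem. Your operator-theoretic route avoids that hidden assumption; the price is precisely the transfer step you flag --- as written it only controls $T$ on $\ker(T)^\perp=\overline{\ran(T^*)}$ and does not recover the constant $A/\Vert K^\dagger\Vert^2$ on the nose, so if you pursue it you should make that restriction explicit.
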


\begin{proof}
\begin{itemize}
\item[$(\Rightarrow)$]
Suppose that $\{x_i\}_{i\in\N}$ is a $K$-$\flat$-frame for $\ran(K)$ with bounds $A$ and $B$. And suppose that $\langle z/x_i\rangle=0, \forall i\in\N$, we have
$$A \Vert K^*z\Vert^2_{\z} \leqslant \sum_{i=1}^{\infty}\Vert \left\langle z/ x_i\right\rangle\Vert^2_{\h} \leqslant B\Vert z\Vert^2_{\z}, ~~~\forall z \in \ran(K).$$
which means that $K^*z=0$. $\ran(K)$ is closed, then there exists $K^{\dagger}\in\el(\z)$ such that $KK^{\dagger}z=z,~\forall z\in\ran(K)$. We have
$$
\Vert z\Vert^2_{\z}= \vert\langle z,z\rangle_{\z}\vert=\vert\langle KK^{\dagger}z,z\rangle_{\z}\vert=\vert\langle K^{\dagger}z,K^*z\rangle_{\z}\vert=\vert\langle K^{\dagger}z,0\rangle_{\z}\vert=0.$$ 
Hence $z=0$, which implies that $\{x_i\}_{i\in\N}$ is $\flat$-complete on $\ran(K)$. In addition, we claim that for every $z\in\ran(K)$, there exists a unique $\{h_i\}_{i\in\N}\subset \h$, such that $z=\sum_{i\in\N}^{\infty}b(h_i,x_i)=T(\{h_i\}_{i\in\N})$, as long as $T$ is surjective, and that by Lemma \ref{lem} we have
$$\frac{A}{\Vert K^{\dagger}\Vert^2}\sum_{i\in\N}^{\infty}\Vert h_i\Vert^2_{\h}\leqslant A \Vert K^*z\Vert^2_{\z} \leqslant \sum_{j\in\N}^{\infty}\Vert \langle\sum_{i\in\N}^{\infty} b(h_i,x_i)/x_j\rangle\Vert^2_{\h}\leqslant B \sum_{i\in\N}^{\infty}\Vert h_i\Vert^2_{\h}, ~~~\forall \{h_i\}_{i\in\N}\in\h$$
hence
$$\frac{A}{\Vert K^{\dagger}\Vert^2}\sum_{i\in\N}^{\infty}\Vert h_i\Vert^2_{\h}\leqslant \sum_{j\in\N}^{\infty}\Vert h_j\Vert^2_{\h}\leqslant B \sum_{i\in\N}^{\infty}\Vert h_i\Vert^2_{\h}, ~~~\forall \{h_i\}_{i\in\N}\in\h$$
$\Rightarrow$ $$\frac{A}{\Vert K^{\dagger}\Vert^2}\sum_{i\in\N}^{\infty}\Vert h_i\Vert^2_{\h}\leqslant \Vert z\Vert^2_{\h}\leqslant B \sum_{i\in\N}^{\infty}\Vert h_i\Vert^2_{\z}, ~~~\forall \{h_i\}_{i\in\N}\in\h$$
$\Rightarrow$ $$\frac{A}{\Vert K^{\dagger}\Vert^2}\sum_{i\in\N}^{\infty}\Vert h_i\Vert^2_{\h}\leqslant \sum_{j\in\N}^{\infty}\Vert T(\{h_i\}_{i\in\N})\Vert^2_{\h}\leqslant B \sum_{i\in\N}^{\infty}\Vert h_i\Vert^2_{\h}, ~~~\forall \{h_i\}_{i\in\N}\in\h.$$
 
\item[$(\Leftarrow)$] Conversely, suppose that $1)$ and $2)$ hold, by theorem \ref{Tdefsurj=kbframe}, we only need to show that $T$ is surjective on $\ran(K)$. Suppose that $T^*(z)=0$, then $\Vert T^*(z) \Vert_{\h}=\underset{i\in\N}{\sum}\Vert \langle z/x_i\rangle\Vert^2_{\h}=0,~~\forall i\in\N$. This implies that $\Vert z\Vert_{\ran(K)}=0\Rightarrow z=0$, we conclude that $T^*$ is injective, which means that $T$ is surjective on $\ran(K)$.
\end{itemize}
\end{proof}

\subsection{The $\flat$-adjoint operator}
In the study of Hilbert spaces, the operator theory took an important place, where bounded operators played an important role. More specifically the adjoint operator or also called the Hermitian conjugate. Now we want to give a generalization of this notion according to the $\flat$-dual product defined in proposition\ref{dualprod} in order to give some results about $b$-frames preserving. In other words we want to figure out under which conditions we have for a bounded linear operator $U$ on a banach space $\B$, the existence of a unique bounded operator $V$ on a Hilbert space $\z$ verifying 
$$\langle z/Ux\rangle=\langle Vz/x\rangle, ~~~\forall x\in\B,~\forall z\in\z.$$
From now on we will suppose that $\h$ is separable and $\{e_i\}_{i\in\N}$ is its orthonormal basis, and that the bilinear mapping $\flat$ has a dense range, and that $\{x_i\}_{i\in\N}\subset \B$ is a $\flat$-orthonormal basis in $\z$ such that $$\sum_{i\in\N}\Vert x_i\Vert<\infty.$$\\
Denote by $\X$ the subspace of $\B$ such that $$\X=\overline{span\{x_i/i\in\N\}}.$$

\begin{pro}
Let $U:\X\rightarrow\X$ be a bounded linear operator, let $\{h_i\}_{i\in\N} \subset\h$, and let $V: \z\rightarrow \z$ be an operator such that for each $z=\underset{i\in\N}{\sum} b(h_i,x_i) \in\z$, $$V^*(z)=\sum_{i\in\N}= \flat(h_i,Ux_i),$$
Then $V^*$ is well defined, bounded and linear.
\end{pro}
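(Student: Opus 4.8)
The plan is to establish the three assertions in turn: well-definedness, linearity, and boundedness, treating well-definedness as the crux. First I would address well-definedness. Since $\{x_i\}_{i\in\N}$ is a $\flat$-orthonormal basis in $\z$, every $z\in\z$ admits a \emph{unique} expansion $z=\sum_{i\in\N}\flat(h_i,x_i)$ with $\{h_i\}_{i\in\N}\subset\h$, so the formula $V^*(z)=\sum_{i\in\N}\flat(h_i,Ux_i)$ assigns an unambiguous value to each $z$ \emph{provided the series on the right converges in $\z$}. To see convergence, note that $Ux_i\in\X$ for each $i$, and I would estimate, using condition (\ref{majo}) on $\flat$, the partial sums: $\Vert\sum_{i=n}^m\flat(h_i,Ux_i)\Vert_{\z}\le M\sum_{i=n}^m\Vert h_i\Vert_{\h}\Vert Ux_i\Vert\le M\Vert U\Vert\sum_{i=n}^m\Vert h_i\Vert_{\h}\Vert x_i\Vert$. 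By Lemma~\ref{lem} we have $\sum_{i\in\N}\Vert h_i\Vert_{\h}^2=\Vert z\Vert_{\z}^2<\infty$, and by hypothesis $\sum_{i\in\N}\Vert x_i\Vert<\infty$; hence by Cauchy–Schwarz $\sum_{i\in\N}\Vert h_i\Vert_{\h}\Vert x_i\Vert\le(\sum_i\Vert h_i\Vert_{\h}^2)^{1/2}(\sum_i\Vert x_i\Vert^2)^{1/2}<\infty$, so the tail sums tend to $0$ and the series converges by the Cauchy criterion in the Hilbert (hence complete) space $\z$. This shows $V^*$ is well defined on all of $\z$.

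Next, linearity. Given $z=\sum_i\flat(h_i,x_i)$ and $z'=\sum_i\flat(h_i',x_i)$ and a scalar $\alpha$, the uniqueness of the $\flat$-basis expansion forces $\alpha z+z'=\sum_i\flat(\alpha h_i+h_i',x_i)$; applying the definition of $V^*$ and using bilinearity of $\flat$ in its first slot gives $V^*(\alpha z+z')=\sum_i\flat(\alpha h_i+h_i',Ux_i)=\alpha\sum_i\flat(h_i,Ux_i)+\sum_i\flat(h_i',Ux_i)=\alpha V^*(z)+V^*(z')$. Here one should remark that the rearrangement of the two convergent series is legitimate precisely because each converges absolutely in the sense just established.

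For boundedness, I would chain the inequality from the convergence estimate with Lemma~\ref{lem}: for any $z=\sum_i\flat(h_i,x_i)$,
\[
\Vert V^*(z)\Vert_{\z}\le M\Vert U\Vert\sum_{i\in\N}\Vert h_i\Vert_{\h}\Vert x_i\Vert
\le M\Vert U\Vert\Big(\sum_{i\in\N}\Vert x_i\Vert^2\Big)^{1/2}\Big(\sum_{i\in\N}\Vert h_i\Vert_{\h}^2\Big)^{1/2}
= M\Vert U\Vert\Big(\sum_{i\in\N}\Vert x_i\Vert^2\Big)^{1/2}\Vert z\Vert_{\z},
\]
so $V^*$ is bounded with $\Vert V^*\Vert\le M\Vert U\Vert(\sum_{i\in\N}\Vert x_i\Vert^2)^{1/2}$ (and one may further bound $\sum_i\Vert x_i\Vert^2\le(\sum_i\Vert x_i\Vert)^2$ to phrase it directly in terms of the standing hypothesis $\sum_i\Vert x_i\Vert<\infty$).

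The main obstacle is the first step: one must be careful that the expression defining $V^*$ is independent of representation, which here is automatic from uniqueness of the $\flat$-orthonormal basis expansion, but genuinely requires the summability hypotheses $\sum_i\Vert x_i\Vert<\infty$ and $\sum_i\Vert h_i\Vert_{\h}^2<\infty$ (the latter supplied by Lemma~\ref{lem}) to guarantee the image series converges at all. Everything after that — linearity and the norm bound — is a routine consequence of the same estimate. One subtlety worth flagging: the statement writes the defining sum with a stray ``$=$'' (``$V^*(z)=\sum_{i\in\N}=\flat(h_i,Ux_i)$''); I read this as the typo it evidently is and take the definition to be $V^*(z)=\sum_{i\in\N}\flat(h_i,Ux_i)$.
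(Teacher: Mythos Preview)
Your proposal is correct and follows essentially the same route as the paper: well-definedness via uniqueness of the $\flat$-orthonormal basis expansion, linearity via bilinearity of $\flat$, and boundedness via the estimate $\Vert V^*z\Vert_{\z}\le M\Vert U\Vert\sum_i\Vert h_i\Vert_{\h}\Vert x_i\Vert$ combined with Lemma~\ref{lem}. The only cosmetic difference is in the last step: the paper bounds $\sum_i\Vert h_i\Vert_{\h}\Vert x_i\Vert\le\sup_i\Vert h_i\Vert_{\h}\cdot\sum_i\Vert x_i\Vert$ and then uses $\sup_i\Vert h_i\Vert_{\h}\le\Vert z\Vert_{\z}$, whereas you use Cauchy--Schwarz to pair $\ell^2$ with $\ell^2$; both are valid and yield slightly different constants.
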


\begin{proof}
$V^*$ is well defined on $\z$ since $\{x_i\}_{i\in\N}\subset \B$ is a $\flat$-orthonormal basis in $\z$ and it is linear, indeed: for each $z=\underset{i\in\N}{\sum}\flat(h_i,x_i),$ and $z'=\underset{i\in\N}{\sum}\flat(h'_i,x_i) \in\z$ and let $\gamma\in\C$, we have 

\begin{align*}
V^*(\gamma z+z')&=V^*\left(\gamma\sum_{i\in\N} \flat(h_i,x_i)+\sum_{i\in\N} \flat(h'_i,x_i)\right)\\
&=V^*\left(\sum_{i\in\N}\flat(\gamma h_i+h'i, x_i)\right)\\
&=\sum_{i\in\N}\flat(\gamma h_i+h'i,U x_i)\\
&=\gamma V^*(z)+V^*(z')
\end{align*}
$V^*$ is also bounded, in fact we have for all $z=\underset{i\in\N}{\sum}\flat(h_i,x_i)\in \z$,

\begin{align*}
\Vert V^*z\Vert_{\z}=\left\Vert V^*\left(\sum_{i\in\N}\flat(h_i,x_i)\right)\right\Vert_{\z}&=\left\Vert \sum_{i\in\N}V^*(\flat(h_i,x_i))\right\Vert_{\z}\\
&= \left\Vert \sum_{i\in\N}\flat(h_i,Ux_i)\right\Vert_{\z}\\
&\leqslant \sum_{i\in\N}\Vert \flat(h_i,Ux_i)\Vert_{\z}\\
& \leqslant \beta\Vert U\Vert\sum_{i\in\N}\Vert h_i\Vert_{\h} \Vert x_i\Vert\\
&\leqslant \beta\Vert U\Vert \sup_{i\in\N}\Vert h_i\Vert_{\h} \sum_{i\in\N} \Vert x_i\Vert
\end{align*}
we have already shown that for all $z=\underset{i\in\N}{\sum}\flat(h_i,x_i)\in \z$, $$\Vert z\Vert^2_{\z}=\sum_{i\in\N}\Vert h_i\Vert^2_{\h},$$
hence $\Vert z\Vert^2_{\z} \geqslant \sup_{i\in\N}\Vert h_i\Vert^2_{\h}=(\sup_{i\in\N}\Vert h_i\Vert_{\h})^2$, which means that $\Vert z\Vert_{\z} \geqslant \sup_{i\in\N}\Vert h_i\Vert_{\h}$. In consequence we have 
$$ \Vert V^*z\Vert_{\z} \leqslant \beta \Vert U\Vert \sum_{i\in\N}\Vert x_i\Vert \Vert z\Vert_{\z}<\infty.$$
\end{proof}
\begin{thm}\label{exstadj}
Let $\{x_i\}_{i\in\N}\subset \B$ be a $\flat-$orthonormal basis in $\z$. Let $U:\X\rightarrow\X$ be a bounded linear operator, there exists a unique operator $V\in\el(\z)$ such that, $\forall z\in\z$,
\begin{equation}\label{adj}
 \langle Vz/x_i\rangle=\langle z/Ux_i\rangle,~~~\forall i\in\N.
 \end{equation}
\end{thm}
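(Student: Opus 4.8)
The strategy is to use the preceding proposition to manufacture the operator $V$ as the adjoint of the already-constructed $V^*$, and then verify that this $V$ satisfies the defining relation \eqref{adj}. First I would invoke the proposition just proved: since $U\in\el(\X)$ is bounded and linear, the operator $V^*:\z\to\z$ defined on the $\flat$-orthonormal basis expansion $z=\sum_{i\in\N}\flat(h_i,x_i)$ by $V^*(z)=\sum_{i\in\N}\flat(h_i,Ux_i)$ is well defined, linear, and bounded, with $\Vert V^*\Vert\le\beta\Vert U\Vert\sum_{i\in\N}\Vert x_i\Vert$. Because $\z$ is a Hilbert space and $V^*$ is a bounded linear operator on it, its Hilbert-space adjoint exists; call it $V:=(V^*)^*\in\el(\z)$. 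It then remains only to check that this $V$ is the operator we want, i.e. that $\langle Vz/x_i\rangle=\langle z/Ux_i\rangle$ for all $z\in\z$ and all $i\in\N$, and that it is the unique such operator.

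For the defining identity, the key is to translate both sides into the genuine scalar product of $\z$ via the Riesz-representation definition of $\langle\,\cdot\,/\,\cdot\,\rangle$, and then push the adjoint across. Fix $i\in\N$, $z\in\z$, and an arbitrary $h\in\h$. By definition of the $\flat$-dual product, $\langle h,\langle z/Ux_i\rangle\rangle_{\h}=\langle\flat(h,Ux_i),z\rangle_{\z}$. Now $\flat(h,Ux_i)$ is exactly $V^*$ applied to the one-term expansion $\flat(h,x_i)$ (which is a legitimate element of $\z$ written in the basis expansion with $h_i=h$ and all other coefficients zero), so $\flat(h,Ux_i)=V^*(\flat(h,x_i))$. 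Hence $\langle h,\langle z/Ux_i\rangle\rangle_{\h}=\langle V^*(\flat(h,x_i)),z\rangle_{\z}=\langle\flat(h,x_i),(V^*)^*z\rangle_{\z}=\langle\flat(h,x_i),Vz\rangle_{\z}=\langle h,\langle Vz/x_i\rangle\rangle_{\h}$, again by the definition of the $\flat$-dual product. Since this holds for every $h\in\h$, we get $\langle Vz/x_i\rangle=\langle z/Ux_i\rangle$, which is \eqref{adj}.

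For uniqueness, suppose $V_1,V_2\in\el(\z)$ both satisfy \eqref{adj}. Then for every $z\in\z$ and every $i\in\N$ we have $\langle (V_1-V_2)z/x_i\rangle=0$. Since $\{x_i\}_{i\in\N}$ is a $\flat$-orthonormal basis in $\z$, it is in particular $\flat$-complete in $\z$ (by the proposition relating $\flat$-orthonormal bases and $\flat$-completeness quoted in the preliminaries), so $\langle w/x_i\rangle=0$ for all $i\in\N$ forces $w=0$; applying this with $w=(V_1-V_2)z$ gives $V_1z=V_2z$ for all $z$, hence $V_1=V_2$.

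The only genuinely delicate point is the step identifying $\flat(h,Ux_i)$ with $V^*(\flat(h,x_i))$: this requires that $\flat(h,x_i)$, viewed as an element of $\z$, has the $\flat$-orthonormal-basis expansion whose $i$-th coefficient is $h$ and whose other coefficients vanish, which is precisely the statement that $\{x_i\}_{i\in\N}$ is $\flat$-orthonormal together with uniqueness of the $\flat$-basis expansion. Once that bookkeeping is pinned down, the rest is a routine Riesz-duality computation, and the boundedness of $V$ is automatic since it is the adjoint of a bounded operator (indeed $\Vert V\Vert=\Vert V^*\Vert\le\beta\Vert U\Vert\sum_{i\in\N}\Vert x_i\Vert$).
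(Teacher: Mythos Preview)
Your proof is correct and follows essentially the same route as the paper: define $V$ as the Hilbert-space adjoint of the operator $V^*$ from the preceding proposition, compute $\langle V^*(\flat(h,x_i)),z\rangle_{\z}$ two ways to obtain $\langle h,\langle z/Ux_i\rangle\rangle_{\h}=\langle h,\langle Vz/x_i\rangle\rangle_{\h}$ for all $h\in\h$, and deduce uniqueness from $\flat$-completeness of the $\flat$-orthonormal basis. Your treatment of the ``delicate point'' (that $\flat(h,x_i)$ has the single-term basis expansion so that $V^*(\flat(h,x_i))=\flat(h,Ux_i)$) and of uniqueness is in fact slightly more explicit than the paper's.
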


\begin{proof}
Let's provide $\h$ with its orthonormal basis $\{e_j\}_{j\in\N}$, and let $h=\sum_{j\in\N} \alpha_je_j \in\h$, note that if $z_k=\sum_{j\in\N}\flat(h_j,x_j)$ with $h_j=0$ if $j\neq k$ and $h_k=h$, then $z_k$ can be also written as $z_k=\flat(h,x_k)=\sum_{j\in\N} \alpha_j\flat(e_j,x_k)$
Let $V$ the adjoint operator of $V^*$ on $\z$. Let $z\in \z$, we have
\begin{equation}\label{adj1}
\langle V^*(\flat(h,x_i)),z\rangle_{\z}=\langle \flat(h,Ux_i),z\rangle_{\z}= \langle h,\langle z/Ux_i\rangle\rangle_{\h}.
\end{equation}
In the other hand
\begin{equation}\label{adj2}
\langle V^*(\flat(h,x_i)),z\rangle_{\z}=\langle \flat(h,x_i),Vz\rangle_{\z}= \langle h,\langle Vz/x_i\rangle\rangle_{\h}.
\end{equation}
from (\ref{adj1}) and (\ref{adj2}) we obtain for all $h\in\h$,
$$ \langle h,\langle z/Ux_i\rangle\rangle_{\h} = \langle h,\langle Vz/x_i\rangle\rangle_{\h},$$
$\Rightarrow$
$$\forall i\in\N,~~\langle z/Ux_i\rangle=\langle Vz/x_i\rangle.$$
Suppose now that there exists $W$ also verifying (\ref{adj}), then we have 
$$
\langle z/Ux_i\rangle=\langle Vz/x_i\rangle=\langle Wz/x_i\rangle,~~\forall i\in\N,$$$\Rightarrow$
$$\langle (V-W)z/x_i\rangle=0,~~\forall i\in\N,$$
$\Rightarrow$
$$V=W,~~\text{on }\X.$$
hence $V$ is unique.
\end{proof}

\begin{cor}\label{bdu}
Let $\{x_i\}_{i\in\N}\subset \B$ be a $\flat-$orthonormal basis in $\z$. Let $U:\X\rightarrow\X$ be a bounded linear operator, there exists a unique operator $V\in\el(\z)$ such that,
\begin{equation}\label{adj}
 \langle Vz/x\rangle=\langle z/Ux\rangle,~~~\forall x\in\X,\forall z\in\z.
 \end{equation}
\end{cor}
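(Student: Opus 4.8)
The plan is to upgrade Theorem~\ref{exstadj} from the generating family $\{x_i\}_{i\in\N}$ to the whole space $\X=\overline{\mathrm{span}\{x_i/i\in\N\}}$, using only the sesquilinearity and continuity of the $\flat$-dual product recorded in Proposition~\ref{dualprod}. Let $V\in\el(\z)$ be the operator furnished by Theorem~\ref{exstadj}, so that $\langle Vz/x_i\rangle=\langle z/Ux_i\rangle$ for every $i\in\N$ and every $z\in\z$; I will show this forces the same identity for every $x\in\X$, and that $V$ is then unique.

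First I would verify (\ref{adj}) on the dense subspace $\mathrm{span}\{x_i/i\in\N\}$. If $x=\sum_{k=1}^{n}\lambda_k x_{i_k}$ with $\lambda_k\in\C$, then $Ux=\sum_{k=1}^{n}\lambda_k Ux_{i_k}$ by linearity of $U$, and since $\xi(z,\cdot)=\langle z/\cdot\rangle$ is anti-linear in its second variable (Proposition~\ref{dualprod}, item~1),
\[
\langle z/Ux\rangle=\Big\langle z\,\Big/\,\textstyle\sum_{k=1}^{n}\lambda_k Ux_{i_k}\Big\rangle=\sum_{k=1}^{n}\overline{\lambda_k}\langle z/Ux_{i_k}\rangle=\sum_{k=1}^{n}\overline{\lambda_k}\langle Vz/x_{i_k}\rangle=\langle Vz/x\rangle
\]
for all $z\in\z$. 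Thus (\ref{adj}) holds on $\mathrm{span}\{x_i/i\in\N\}$.

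Next I would pass to the limit. Fix $z\in\z$ and $x\in\X$, and choose $x^{(n)}\in\mathrm{span}\{x_i/i\in\N\}$ with $x^{(n)}\to x$ in $\B$. Since $U$ is bounded we have $Ux^{(n)}\to Ux$, and since $\xi$ is continuous for the product topology of $\z\times\B$ (Proposition~\ref{dualprod}, item~2) it follows that $\langle z/Ux^{(n)}\rangle\to\langle z/Ux\rangle$ and $\langle Vz/x^{(n)}\rangle\to\langle Vz/x\rangle$ in $\h$. Taking limits in the displayed equality yields $\langle Vz/x\rangle=\langle z/Ux\rangle$ for all $x\in\X$ and all $z\in\z$, which is (\ref{adj}).

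For uniqueness, if $W\in\el(\z)$ also satisfies (\ref{adj}) on $\X$, then in particular $\langle(V-W)z/x_i\rangle=0$ for all $i\in\N$ and $z\in\z$; the uniqueness clause of Theorem~\ref{exstadj} (equivalently, the $\flat$-completeness of the $\flat$-orthonormal basis $\{x_i\}_{i\in\N}$ in $\z$) then gives $V=W$. The only step needing attention is this last limiting argument, but the hypotheses guarantee both the joint continuity of $\xi$ and the boundedness of $U$, so no genuine obstacle arises; the corollary is essentially a density-plus-continuity extension of Theorem~\ref{exstadj}.
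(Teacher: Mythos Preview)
Your proof is correct and is precisely the intended density-plus-continuity extension of Theorem~\ref{exstadj}; the paper states the corollary without a separate proof, treating it as immediate from the theorem together with the sesquilinearity and continuity of $\xi$ (Proposition~\ref{dualprod}). Your uniqueness argument, reducing to the uniqueness clause of Theorem~\ref{exstadj} via $\flat$-completeness, also matches the paper's reasoning.
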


We call the unique operator verifying (\ref{adj}) on $\z$,  the $\flat$-adjoint of $U$ and we denote it by $U^{\flat}$.

\begin{ex}
If we go back to the example (\ref{exemple b-orth}) and we consider the bounded linear operator
$ U:\B \rightarrow\B$ such that $U(f_1)=f_1+f_2$ and $U(f_2)=0$.
We have
$$\langle b(h,Ux),z\rangle=\langle b(h,x_1f_1+x_1f_2),z\rangle=h_1x_1z_1+h_2x_1z_3.$$
So, $\langle z/Ux\rangle=x_1z_1e_1+x_1z_3e_2.$\\
Let $V:\z\rightarrow\z$ such that $\langle Vz/x\rangle=x_1z_1e_1+x_1z_3e_2$, we can write $V$ as 
$$ Vz= \left(\begin{matrix}
\alpha^1_1 &\alpha^1_2&\alpha^1_3\\
\alpha^2_1 &\alpha^2_2&\alpha^2_3\\
\alpha^3_1 &\alpha^3_2&\alpha^3_3
\end{matrix}\right)\left(\begin{matrix}
z_1\\ z_2\\z_3
\end{matrix}\right)=\sum_{i=1}^3\alpha^1_iz_iu_1+\sum_{i=1}^3\alpha^2_iz_iu_2+\sum_{i=1}^3\alpha^3_iz_iu_3.$$
Recall that, $\langle z/x\rangle=(x_1z_1+x_2z_2)e_1+x_1z_3e_2$, so $$\langle Vz/x\rangle=\left(x_1\sum_{i=1}^3\alpha^1_iz_i+x_2\sum_{i=1}^3\alpha^2_iz_i\right)e_1+x_1\sum_{i=1}^3\alpha^3_iz_ie_2.$$
Then
$$\left(x_1\sum_{i=1}^3\alpha^1_iz_i+x_2\sum_{i=1}^3\alpha^2_iz_i\right)e_1+x_1\sum_{i=1}^3\alpha^3_iz_ie_2=x_1z_1e_1+x_1z_3e_2.$$
By a simple calculus we obtain the $b$-adjoint of $U$,
$$U^b=V=\left( \begin{matrix}
1&0&0\\0&0&0\\0&0&1
\end{matrix}\right).$$
Hence, $U^bz=z_1u_1+z_3u_3,$ $\forall z\in\z.$
\end{ex}

\begin{ex}
Let $\h=\overline{\ell^2(\C)\cap\ell^1(\C)}^{\ell^2(\C)}$, $\z=\ell^2(\C)$, and $\B=\ell^{\infty}(\C)$. And let
\begin{align*}
U:\B&\rightarrow\B\\
\{x_n\}_{n\in\N} &\mapsto \{x_{n+1}\}_{n\in\N}
\end{align*}
we have $\Vert U\Vert=1$, and consider the bilinear mapping
\begin{align*}
\phi: \h\times\B &\rightarrow\z\\
\left(\{h_n\}_{n\in\N},\{b_n\}_{n\in\N}\right)&\mapsto \{h_nb_n\}_{n\in\N}
\end{align*}
It's clear that: $\Vert \phi\left(\{h_n\}_{n\in\N},\{b_n\}_{n\in\N}\right)\Vert_2\leq \Vert \{h_n\}_{n\in\N}\Vert_{\h}\Vert \{b_n\}_{n\in\N}\Vert_{\infty}.$\\
And we have for every $\{z_n\}_{n\in\N}\subset\z$
$$
\langle \phi\left(\{h_n\}_{n\in\N},\{b_n\}_{n\in\N}\right),\{z_n\}_{n\in\N}\rangle_2= \{h_nb_n\bar{z_n}\}_{n\in\N}=\{h_n\overline{\bar{b_n}z_n}\}_{n\in\N}
=\langle \{h_n\}_{n\in\N},\{\bar{b_n}z_n\}_{n\in\N}\rangle_{\h}.$$
hence for every $\{z_n\}_{n\in\N}\subset\z$ and every $\{b_n\}_{n\in\N}\subset\B$, we have, $ \langle \{z_n\}_{n\in\N},\{b_n\}\rangle_{\h}=\{\bar{b_n}z_n\}_{n\in\N}.$
Whence 
\begin{align*}
\langle \phi\left(\{h_n\}_{n\in\N},U(\{b_n\}_{n\in\N})\right),\{z_n\}_{n\in\N}\rangle_2
&=\langle \{h_n\}_{n\in\N},\{\overline{b_{n+1}}z_n\}_{n\in\N}\rangle_{\h}\\
&=\langle \{h_n\}_{n\in\N},\{\sigma_n\}_{n\in\N^*}\rangle_{\h}\\
&=\langle \{h_n\}_{n\in\N},\langle V(\{z_n\}_{n\in\N})/\{b_n\}_{n\in\N}\rangle\rangle_{\h}
\end{align*}
Where $\sigma_k=\bar{b}_kz_{k-1}$, for $k>0$, and $\sigma_0=0$, and
\begin{align*}
V:\z&\rightarrow\z\\
\{z_n\}_{n\in\N} &\mapsto \{0,z_0,z_1,...,z_n,..)
\end{align*}
$V=U^{\flat}$ is the $\flat$-adjoint operator of $U$ on $\ell^2(\C)$.
(Note that the $b$-adjoint of $U$ in $\ell^2(\C)$ coincides with the adjoint operator of $U$ in $\ell^{\infty}(\C)$).
\end{ex}

\begin{rmq}
The opposite of Corollary \ref{bdu} is not true; in fact, for a bounded linear operator $V:\z\rightarrow\z$ we cannot always find a bounded linear operator $U:\B\rightarrow\B$ such that $$\langle Vz/x\rangle=\langle z/Ux\rangle,~~~\forall z\in\z,\forall x\in\B.$$
\end{rmq}
We can see this by giving the simple following example:
\begin{ex}
Let $\h=\R^3$, $\B=\R^2$, and $\z=\R^4$, and let $(e_1,e_2,e_3), (f_1,f_2)$, and $(u_1,u_2,u_3,u_4)$ be their canonical bases respectively. Consider the bilinear mapping $b:\h\times\B\rightarrow\z$ such that
\begin{align*}
b(e_1,f_1)&=u_1,~~~~ b(e_2,f_1)=u_3,~~~~b(e_3,f_1)=u_1-u_2,
\\b(e_1,f_2)&=u_2,~~~~b(e_2,f_2)=u_4,~~~~ b(e_3,f_2)=u_1+u_2.
\end{align*}
Let $h=\sum_{i=1}^3h_ie_i\in\h$ and $x=x_1f_1+x_2f_2\in\B$, we have $$ b(h,x)=[(h_1+h_3)x_1+h_3x_2]u_1+[(h_1+h_3)x_2-h_3x_1]u_2+h_2x_1u_3+h_2x_2u_4,$$
and for each $z=\sum_{i=1}^4z_iu_i\in\z$, we have $$ \langle b(h,x),z\rangle= h_1(x_1z_1+x_2z_2)+h_2(x_1z_3+x_2z_4)+h_3[(x_1+x_2)z_1+(x_2-x_1)z_2],$$
So we claim that 
$$\langle z/x\rangle=(x_1z_1+x_2z_2)e_1+(x_1z_3+x_2z_4)e_2+[(x_1+x_2)z_1+(x_2-x_1)z_2]e_3, ~~~\forall z\in\z,\forall x\in\B.$$
Now consider the bounded linear operator $V:\z\rightarrow\z$, defined as $V(u_1)=u_2$ and $V(u_2)=V(u_3)=V(u_4)=0$.
We have $\langle Vz/x\rangle=x_2z_1e_1+(x_2-x_1)z_1e_3$. Suppose that there exists a bounded linear operator $U:\B\rightarrow\B$ such that $\langle Vz/x\rangle=\langle z/Ux\rangle$. $U$ will be such as $U(x_1)=\alpha x_1+\beta x_2$ and $U(x_2)=\gamma x_1+\delta x_2$. Then we have
\begin{multline*}
\langle z/Ux\rangle=[(\alpha x_1+\beta x_2)z_1+(\gamma x_1+\delta x_2)z_2]e_1+[(\alpha x_1+\beta x_2)z_3+(\gamma x_1+\delta x_2)z_4]e_2\\+\left([(\alpha+\gamma)x_1+(\beta+\delta) x_2]z_1+[(\alpha-\gamma)x_1+(\beta-\delta)x_2]z_2\right)e_3.
\end{multline*}
We obtain $\alpha=\delta=\gamma=0$ and $\beta=1$, which is contradictory; because if it's not then we will have $\beta x_2z_3e_2\neq0,\forall x_2\in\B,\forall z_3\in\z$, which is not true. Hence we can't find $U$.

\end{ex}

\subsection{Stability and Preserving}

Let $\{x_i\}_{i\in\N},\{y_i\}_{i\in\N}$ be two sequences in $\B$, and suppose that the bilinear mapping $\flat$ has a dense range. We have the following results:
\begin{pro}(Stability by summing).\label{summing}\\
If $\{x_i\}_{i\in\N}$ and $\{y_i\}_{i\in\N}$ are  $\flat-$frames for $\z$, and if $T_x(\{h_i\}_{i\in\N})=\underset{i\in\N}{\sum}\flat(h_i,x_i),$ and $T_y(\{h_i\}_{i\in\N})=\underset{i\in\N}{\sum}\flat(h_i,y_i),$ are respectively their corresponding $\flat$-synthesis operators for $\{h_i\}_{i\in\N}\subset \ell^2(\h)$.\\ Then $\{x_i+y_i\}_{i\in\N}$ is a $\flat-$frame for $\z$ and its $\flat$-synthesis operator is given by $$T_{x+y}(\{h_i\}_{i\in\N})=T_x(\{h_i\}_{i\in\N})+T_y(\{h_i\}_{i\in\N}),~\forall \{h_i\}_{i\in\N}\subset \ell^2(\h).$$ And moreover $$\Vert T_{x+y}\Vert\leq\sqrt{B_x}+\sqrt{B_y}.$$
\end{pro}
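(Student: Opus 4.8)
The plan is to leverage Theorem~\ref{bessel=Tdef}, which characterizes $\flat$-Besselian sequences through their synthesis operators. First I would observe that for any $\{h_i\}_{i\in\N}\in\ell^2(\h)$, bilinearity of $\flat$ in its second argument gives $\flat(h_i,x_i+y_i)=\flat(h_i,x_i)+\flat(h_i,y_i)$, so term-by-term the partial sums of $\sum_{i\in\N}\flat(h_i,x_i+y_i)$ equal those of $\sum_{i\in\N}\flat(h_i,x_i)+\sum_{i\in\N}\flat(h_i,y_i)$. Since $\{x_i\}$ and $\{y_i\}$ are $\flat$-frames, hence $\flat$-Besselian, Theorem~\ref{bessel=Tdef} tells us $T_x$ and $T_y$ are well defined and bounded with $\Vert T_x\Vert\le\sqrt{B_x}$, $\Vert T_y\Vert\le\sqrt{B_y}$; therefore both series converge, their sum converges, and $T_{x+y}$ is well defined with $T_{x+y}=T_x+T_y$. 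The triangle inequality for the operator norm then immediately yields $\Vert T_{x+y}\Vert\le\Vert T_x\Vert+\Vert T_y\Vert\le\sqrt{B_x}+\sqrt{B_y}$, which also gives the upper (Bessel) bound $B_{x+y}=(\sqrt{B_x}+\sqrt{B_y})^2$ for the frame inequality via the adjoint computation in Theorem~\ref{bessel=Tdef}.

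Next I would establish the lower frame bound, which is the only genuinely nontrivial point. From $T_{x+y}=T_x+T_y$ we get $T_{x+y}^*=T_x^*+T_y^*$, and by the adjoint formula in Theorem~\ref{bessel=Tdef} this reads $\langle z/(x_i+y_i)\rangle = \langle z/x_i\rangle + \langle z/y_i\rangle$ for all $z\in\z$ (which also follows directly from sesquilinearity of $\xi$, item~1 of Proposition~\ref{dualprod}). Now $\sum_{i\in\N}\Vert\langle z/(x_i+y_i)\rangle\Vert_{\h}^2 = \Vert T_{x+y}^*z\Vert_2^2$. To bound this below I would argue that $T_x$ is surjective: since $\{x_i\}$ is a $\flat$-frame for $\z$, it is a $\flat$-frame for $\ran(Id_{\z})=\z$, so by Theorem~\ref{Tdefsurj=kbframe} (with $K=Id_{\z}$) the operator $T_x$ is surjective. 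Hence $T_x T_x^*$ is invertible and positive on $\z$, and the lower frame bound $A_x$ for $\{x_i\}$ means precisely $\Vert T_x^* z\Vert_2^2 \ge A_x\Vert z\Vert_{\z}^2$ for all $z$, i.e. $T_x^*$ is bounded below. The remaining task is to show that adding $T_y^*$ cannot destroy the bounded-below property \emph{provided} we can still guarantee a uniform lower bound.

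Here lies the main obstacle: in general, $T_x^*+T_y^*$ need \emph{not} be bounded below — if $y_i = -x_i$ then $T_{x+y}^* = 0$. So the proposition as stated, claiming that $\{x_i+y_i\}$ is \emph{always} a $\flat$-frame, cannot be proved without an extra hypothesis (for instance, that $T_y^*$ has small enough norm relative to the lower bound of $T_x^*$, or some compatibility/positivity condition). I would therefore expect the intended proof either to invoke an unstated assumption, or to only actually prove the Bessel (upper) part rigorously and treat the lower bound under an implicit genericity assumption. Modulo that gap, the structure of the argument is: (i) term-by-term bilinearity gives $T_{x+y}=T_x+T_y$ and convergence; (ii) operator-norm triangle inequality gives the upper bound $\sqrt{B_x}+\sqrt{B_y}$; (iii) sesquilinearity of the $\flat$-dual product transfers additivity to the analysis side, $\langle z/(x_i+y_i)\rangle=\langle z/x_i\rangle+\langle z/y_i\rangle$; (iv) one derives the lower bound from boundedness-below of $T_x^*$ together with whatever control on $T_y^*$ is available. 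I would flag step (iv) as the place where the claim is only as strong as the (here missing) hypothesis, and present the clean parts (i)--(iii) in full.
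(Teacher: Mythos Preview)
Your steps (i)--(iii) coincide with the paper's proof: the paper also uses bilinearity to get $T_{x+y}=T_x+T_y$, bounds the operator norm by the triangle inequality, and computes $T_{x+y}^*(z)=\{\langle z/x_i+y_i\rangle\}_{i\in\N}$.

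For the lower bound (your step (iv)) the paper takes a different route: it tries to show that $T_{x+y}^*$ is injective by arguing that if $\langle \flat(h,x_i+y_i),\,z-z'\rangle_{\z}=0$ for all $h\in\h$ and all $i\in\N$, then density of $\ran(\flat)$ in $\z$ forces $z=z'$; from injectivity of $T_{x+y}^*$ it then infers surjectivity of $T_{x+y}$ and invokes the characterization ``$T$ well defined and surjective $\Leftrightarrow$ $\flat$-frame'' to conclude.

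Your suspicion about step (iv) is correct, and in fact the paper's argument has two defects at exactly the place you flag. First, what is actually needed is density of the span of $\{\flat(h,x_i+y_i):h\in\h,\ i\in\N\}$, not of $\ran(\flat)$; your counterexample $y_i=-x_i$ makes every $\flat(h,x_i+y_i)$ vanish while $\ran(\flat)$ may still be dense, so the implication fails. Second, even if $T_{x+y}^*$ were injective, that yields only that $T_{x+y}$ has dense range; surjectivity requires $T_{x+y}^*$ to be bounded below, which is strictly stronger. So the proposition as stated is not provable without an extra hypothesis, and your identification of the gap is accurate.
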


\begin{proof}
Suppose that $\{x_i\}_{i\in\N}$ and $\{y_i\}_{i\in\N}$ are  $\flat-$frames for $\z$ with upper bounds $B_x$ and $B_y$ respectively.
Let $\{h_i\}_{i\in\N}\subset \ell^2(\h)$, we have 
\begin{align*}
 T_{x+y}(\{h_i\}_{i\in\N})=T_x(\{h_i\}_{i\in\N})+T_y(\{h_i\}_{i\in\N})&= \sum_{i=1}^{\infty}\flat(h_i,x_i)+\sum_{i=1}^{\infty}\flat(h_i,y_i)\\
 &=\sum_{i=1}^{\infty}\flat(h_i,x_i+y_i).
 \end{align*}
 $T_{x+y}$ is well defined, because we have $$\left\Vert \sum_{i=1}^{\infty}\flat(h_i,x_i+y_i)\right\Vert \leqslant \left\Vert \sum_{i=1}^{\infty}\flat(h_i,x_i)\right\Vert+\left\Vert\sum_{i=1}^{\infty}\flat(h_i,y_i)\right\Vert.$$
And it is clear that $\Vert T_{x+y}\Vert\leq \Vert T_x\Vert+\Vert T_y\Vert\leq \sqrt{B_x}+\sqrt{B_y}<\infty$.\\
Now let $z\in\z$,
$$\langle T_{x+y}(\{h_i\}_{i\in\N}), z\rangle_{\z}= \sum_{i=1}^{\infty}\langle \flat(h_i,x_i+y_i),z\rangle_{\z}
= \sum_{i=1}^{\infty}\langle h_i,\langle z/x_i+y_i\rangle\rangle_{\h}
=\left\langle\{h_i\}_{i\in\N},\{\langle z/x_i+y_i\rangle\}_{i\in\N}\right\rangle_{\h}.$$
hence the $\flat$-analysis operator $T^*$ is given by $T^*(z)=\{\langle z/x_i+y_i\rangle\}_{i\in\N},~\forall z\in\z.$
Now let $z,z'\in\z$, such that $T^*(z)=T^*(z')$, it means that
$$\forall i\in\N, ~~ \langle z/x_i+y_i\rangle =\langle z'/x_i+y_i\rangle,$$
$\Rightarrow$ 
$$\forall h\in\h, \forall i \in\N,~~ \langle b(h,x_i+y_i),z\rangle_{\z}=\langle b(h,x_i+y_i),z'\rangle_{\z},$$
$\Rightarrow$ 
$$\forall h\in\h, \forall i \in\N,~~ \langle b(h,x_i+y_i),z-z'\rangle_{\z}=0.$$

As $\ran(\flat)$ is dense, then $\z=\overline{\ran}(\flat)\oplus\ker(\flat)\Rightarrow \ker(\flat)=\{0\}$, whence necessarily $z=z'$. Which implies that $T$ is surjective, then by theorem 12 of \cite{b-frame} $\{x_i+y_i\}_{i\in\N}$ is a $\flat-$frame for $\z$.
\end{proof}

\begin{cor}
 If $\{x_i\}_{i\in\N}$ and $\{y_i\}_{i\in\N}$ are  $K$-$\flat$-frames for $\z$, and if $T_x(\{h_i\}_{i\in\N})=\underset{i\in\N}{\sum}\flat(h_i,x_i),$ and $T_y(\{h_i\}_{i\in\N})=\underset{i\in\N}{\sum}\flat(h_i,y_i),$ are respectively their corresponding $K$-$\flat$-synthesis operators for $\{h_i\}_{i\in\N}\subset \ell^2(\h)$. Then $\{x_i+y_i\}_{i\in\N}$ is a $K$-$\flat$-frame for $\ran(K)$ and its $K$-$\flat$-synthesis operator is given by $$T_{x+y}(\{h_i\}_{i\in\N})=T_x(\{h_i\}_{i\in\N})+T_y(\{h_i\}_{i\in\N}),~\forall \{h_i\}_{i\in\N}\subset \ell^2(\h)$$ And moreover $\Vert T_{x+y}\Vert\leq\sqrt{B_x}+\sqrt{B_y}$.
\end{cor}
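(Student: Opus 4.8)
The plan is to transcribe the proof of Proposition~\ref{summing} into the $K$-weighted setting, replacing its two ingredients --- the Bessel characterization and the surjective-synthesis-operator characterization of $\flat$-frames --- by their $K$-analogues, Theorem~\ref{bessel=Tdef} and Theorem~\ref{Tdefsurj=kbframe}. First I would record the elementary identity $T_{x+y}(\{h_i\}_{i\in\N})=\sum_{i\in\N}\flat(h_i,x_i+y_i)=T_x(\{h_i\}_{i\in\N})+T_y(\{h_i\}_{i\in\N})$ for every $\{h_i\}_{i\in\N}\in\ell^2(\h)$, which is immediate from linearity of $\flat$ in its first slot and simultaneously \emph{defines} $T_{x+y}$ and exhibits it as $T_x+T_y$.

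For the upper bound: since $\{x_i\}_{i\in\N}$ and $\{y_i\}_{i\in\N}$ are $K$-$\flat$-frames they are $K$-$\flat$-Besselian, so by Theorem~\ref{bessel=Tdef} the operators $T_x,T_y$ are well defined, bounded, with closed range, and $\Vert T_x\Vert\le\sqrt{B_x}$, $\Vert T_y\Vert\le\sqrt{B_y}$. The triangle inequality $\Vert T_{x+y}(\{h_i\})\Vert_{\z}\le\Vert T_x(\{h_i\})\Vert_{\z}+\Vert T_y(\{h_i\})\Vert_{\z}$ then shows $T_{x+y}$ is well defined and bounded with $\Vert T_{x+y}\Vert\le\Vert T_x\Vert+\Vert T_y\Vert\le\sqrt{B_x}+\sqrt{B_y}$. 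Feeding this back into Theorem~\ref{bessel=Tdef} gives that $\{x_i+y_i\}_{i\in\N}$ is $K$-$\flat$-Besselian with bound $(\sqrt{B_x}+\sqrt{B_y})^2$ --- that is, the right-hand inequality of \eqref{k-b-frame} --- and at the same time the announced norm estimate.

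For the lower bound I would first compute the analysis operator: using the sesquilinearity of $\xi$ from Proposition~\ref{dualprod}, $\langle T_{x+y}(\{h_i\}),z\rangle_{\z}=\sum_{i\in\N}\langle\flat(h_i,x_i+y_i),z\rangle_{\z}=\sum_{i\in\N}\langle h_i,\langle z/x_i+y_i\rangle\rangle_{\h}$ for all $z\in\z$, whence $T_{x+y}^*(z)=\{\langle z/x_i+y_i\rangle\}_{i\in\N}$. By Theorem~\ref{Tdefsurj=kbframe} it then suffices to check that $T_{x+y}$ is surjective on $\ran(K)$. As in Proposition~\ref{summing} I would prove $T_{x+y}^*$ injective: if $T_{x+y}^*(z)=0$ then $\langle z/x_i+y_i\rangle=0$ for every $i$, hence $\langle\flat(h,x_i+y_i),z\rangle_{\z}=0$ for all $h\in\h$ and all $i\in\N$, and since $\ran(\flat)$ is dense this forces $z=0$. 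Combined with the closed range of $T_{x+y}$ (from the Besselian property just established), injectivity of $T_{x+y}^*$ gives $\ran(T_{x+y})=\z\supseteq\ran(K)$, so Theorem~\ref{Tdefsurj=kbframe} applies; reading off its inequality \eqref{equa} produces the explicit lower $K$-$\flat$-frame bound $\Vert K\Vert^{-4}\Vert T_{x+y}^{\dagger}\Vert^{-2}\Vert K^{\dagger}\Vert^{-2}$ (the pseudo-inverse $T_{x+y}^{\dagger}$ existing because $T_{x+y}$ has closed range).

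I expect the surjectivity step to be the main obstacle, specifically the implication ``$\langle\flat(h,x_i+y_i),z\rangle_{\z}=0$ for all $h\in\h,i\in\N\ \Rightarrow\ z=0$'': the vectors $\flat(h,x_i+y_i)$ need not span a dense subspace merely because $\ran(\flat)$ is dense, so this point carries the same delicacy (and relies on the same standing hypothesis that $\flat$ has dense range) as the corresponding step in Proposition~\ref{summing}; everything else is a routine rewriting of the $\flat$-frame argument with the frame property read on $\ran(K)$ rather than on $\z$ and with Theorems~\ref{bessel=Tdef} and~\ref{Tdefsurj=kbframe} invoked in place of their $K\equiv Id_{\z}$ versions.
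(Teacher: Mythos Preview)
Your proposal is correct and follows essentially the same route as the paper: the paper's own proof is nothing more than the sentence ``the proof is similar to the one of Proposition~\ref{summing}, except that in this case the $K$-$\flat$-synthesis operator cannot be surjective unless $K$ has closed range and the result won't be valid outside $\ran(K)$,'' and what you have written is precisely the detailed execution of that sentence, invoking Theorems~\ref{bessel=Tdef} and~\ref{Tdefsurj=kbframe} in place of their $K\equiv Id_{\z}$ versions. Your explicit caveat about the density step matches the paper's own (equally unproved) use of that implication in Proposition~\ref{summing}.
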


\begin{proof}
The proof is simillar to the one of proposition \ref{summing}, except that in this case the $K$-$\flat$-synthesis operator cannot be surjective unless if $K$ has a closed range and the result won't be valid outside $\ran(K)$.
\end{proof}

\begin{pro}
Let $\{x_i\}_{i\in\N}\subset \B$ be a $K$-$\flat$-frame for $\z$, and let $Q\in \el(\z)$ such that $\ran(Q)\subseteq\ran(K)$. Then $\{x_i\}_{i\in\N}$ is also a $Q$-$\flat$-frame for $\z$.
\end{pro}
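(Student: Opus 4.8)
The plan is to leave the Bessel (upper) inequality untouched and to transfer the lower inequality from $K$ to $Q$ by means of Douglas' factorization theorem (Theorem \ref{douglas}). First I would record that, since $\{x_i\}_{i\in\N}$ is a $K$-$\flat$-frame for $\z$, there exist constants $0<A\le B<\infty$ with
$$A\Vert K^*z\Vert^2_{\z}\le \sum_{i\in\N}\Vert\langle z/x_i\rangle\Vert^2_{\h}\le B\Vert z\Vert^2_{\z},\qquad\forall z\in\z.$$
The right-hand inequality does not involve $K$ at all, so it already serves as the upper bound (with the same constant $B$) for the sought $Q$-$\flat$-frame; only the left-hand inequality has to be reproved with $Q^*$ in place of $K^*$.

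Next I would invoke Theorem \ref{douglas} with $S=Q$ and $T=K$, both regarded as elements of $\el(\z)$. The hypothesis $\ran(Q)\subseteq\ran(K)$ is precisely statement (i) of that theorem, so statement (ii) provides a constant $\lambda>0$ such that $QQ^*\le\lambda^2 KK^*$. Testing this operator inequality against an arbitrary $z\in\z$ gives
$$\Vert Q^*z\Vert^2_{\z}=\langle QQ^*z,z\rangle_{\z}\le\lambda^2\langle KK^*z,z\rangle_{\z}=\lambda^2\Vert K^*z\Vert^2_{\z}.$$

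Finally I would combine the two displays: for every $z\in\z$,
$$\frac{A}{\lambda^2}\Vert Q^*z\Vert^2_{\z}\le A\Vert K^*z\Vert^2_{\z}\le\sum_{i\in\N}\Vert\langle z/x_i\rangle\Vert^2_{\h}\le B\Vert z\Vert^2_{\z},$$
which is exactly the definition of a $Q$-$\flat$-frame for $\z$ with bounds $A/\lambda^2$ and $B$. I do not expect any genuine obstacle in this argument: the only non-formal ingredient is the application of Douglas' theorem, which is available as Theorem \ref{douglas}, and everything else is a short chain of inequalities; the one point to state carefully is that the constant $\lambda$ produced by Douglas' theorem is strictly positive, so that $A/\lambda^2$ is a legitimate (strictly positive) lower frame bound.
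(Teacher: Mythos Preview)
Your proposal is correct and follows essentially the same route as the paper: both apply Douglas' theorem (Theorem \ref{douglas}) to the inclusion $\ran(Q)\subseteq\ran(K)$ to obtain $QQ^*\le\lambda^2 KK^*$, translate this into $\Vert Q^*z\Vert^2_{\z}\le\lambda^2\Vert K^*z\Vert^2_{\z}$, and insert it into the lower $K$-$\flat$-frame inequality to arrive at the bounds $A/\lambda^2$ and $B$.
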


\begin{proof}
Suppose that $\{x_i\}_{i\in\N}$ is a $K$-$\flat$-frame for $\z$, then there exist $0<A\leq B<\infty$, such that
$$A \Vert K^*z\Vert^2_{\z} \leqslant \sum_{i=1}^{\infty}\Vert \left\langle z/ x_i\right\rangle\Vert^2_{\h} \leqslant B\Vert z\Vert^2_{\z}, ~~~\forall z \in \ran(K).$$
let $Q \in \el(\h)$, such that $\ran(Q)\subseteq\ran(K)$, then by theorem \ref{douglas}, there exists $\lambda>0$ such that $TT^*\leq \lambda^2 KK^*$, then $\forall z\in\ran(Q)$ we have for $\lambda>0$, $\langle QQ^*z,z\rangle_{\z}\leq \langle \lambda^2KK^*z,z\rangle_{\z}$, which implies that
\begin{align*}
\langle QQ^*z,z\rangle_{\z}\leq \lambda^2\langle KK^*z,z\rangle_{\z}
&\Rightarrow \langle Q^*z,Q^*z\rangle_{\z}\leq \lambda^2\langle K^*z,k^*z\rangle_{\z}\\
&\Rightarrow \Vert Q^*z\Vert^2 \leq \lambda^2\Vert K^*z\Vert^2 \\
&\Rightarrow \frac{A}{\lambda^2}\Vert Q^*z\Vert^2 \leq \Vert K^*z\Vert^2\leq \sum_{i=1}^{\infty}\Vert \left\langle z/ x_i\right\rangle\Vert^2_{\h}\leq B\Vert z\Vert^2_{\z}.
\end{align*}
hence $\{x_i\}_{i\in\N}$ is a $Q$-$\flat
$-frame with bounds $\frac{A}{\lambda^2}$, and $B$.
\end{proof}

\begin{thm}\label{omegax}
Let $\{y_i\}_{i\in\N}\subset\B$ be a $\flat$-orthonormal basis in $\z$, and let $K\in\el(\z)$ have a closed range, then $\{x_i\}_{i\in\N}\subset\B$ forms a $K$-$\flat$-frame for $\ran(K)$ if and only if there exists a bounded surjective operator $\Omega : \z\rightarrow\z$ such that: $$\Omega(\flat(h,y_i))=\flat(h,x_i), ~~~\forall h\in\h,~\forall i\in\N.$$
\end{thm}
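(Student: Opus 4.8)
The plan is to pull everything back through the synthesis operator of the reference $\flat$-orthonormal basis $\{y_i\}$, which is a bijective isometry between $\ell^2(\h)$ and $\z$. First I would record this preliminary fact. Consider $T_y\colon\ell^2(\h)\to\z$, $T_y(\{h_i\}_{i\in\N})=\sum_{i\in\N}\flat(h_i,y_i)$. By Lemma \ref{lem}, $\Vert T_y(\{h_i\})\Vert_{\z}^2=\sum_{i\in\N}\Vert h_i\Vert_{\h}^2$, so $T_y$ is isometric; and since $\{y_i\}$ is a $\flat$-basis, every $z\in\z$ is represented uniquely as $\sum_{i\in\N}\flat(h_i,y_i)$, so $T_y$ is onto and injective. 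Hence $T_y^{-1}$ exists and is again a bijective isometry. I will also use that, for fixed $i$ and $h\in\h$, the element $\flat(h,y_i)$ equals $T_y(\{h_j\}_{j\in\N})$ with $h_i=h$ and $h_j=0$ for $j\neq i$; consequently $T_y^{-1}(\flat(h,y_i))$ is exactly that finitely supported sequence.

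For the implication ``$\Leftarrow$'', assume there is a bounded surjective $\Omega\colon\z\to\z$ with $\Omega(\flat(h,y_i))=\flat(h,x_i)$ for all $h\in\h$, $i\in\N$. Set $T_x:=\Omega\circ T_y$. For $\{h_i\}\in\ell^2(\h)$ the series $\sum\flat(h_i,y_i)$ converges to $T_y(\{h_i\})$, and continuity of $\Omega$ lets me pass it through the partial sums, so $\sum\flat(h_i,x_i)$ converges and equals $T_x(\{h_i\})$; thus $T_x$ is precisely the $\flat$-synthesis operator of $\{x_i\}$, it is bounded with $\Vert T_x\Vert\le\Vert\Omega\Vert$, and it is surjective as a composition of the surjection $\Omega$ with the bijection $T_y$, in particular $\ran(K)\subseteq\ran(T_x)$. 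Since $K$ has closed range, Theorem \ref{Tdefsurj=kbframe} then gives that $\{x_i\}$ is a $K$-$\flat$-frame for $\ran(K)$.

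For ``$\Rightarrow$'', assume $\{x_i\}$ is a $K$-$\flat$-frame for $\ran(K)$. By Theorem \ref{Tdefsurj=kbframe} its synthesis operator $T_x\colon\ell^2(\h)\to\z$ is well defined, bounded and surjective onto $\ran(K)$. Define $\Omega:=T_x\circ T_y^{-1}\colon\z\to\z$; it is bounded and $\ran(\Omega)=\ran(T_x)\supseteq\ran(K)$, so $\Omega$ is surjective (onto $\ran(K)$). Finally, using the description of $T_y^{-1}(\flat(h,y_i))$ from the first paragraph, $\Omega(\flat(h,y_i))=T_x(\{h_j\}_{j\in\N})=\sum_{j\in\N}\flat(h_j,x_j)=\flat(h,x_i)$, which is the required intertwining identity.

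The delicate point, and the one to state carefully, is the meaning of ``surjective'' for $\Omega$: the intertwining identity forces $\Omega$ to agree, on the dense linear span of the $\flat(h,y_i)$ (which equals $\z$ because $\{y_i\}$ is a $\flat$-basis), with the operator $T_x\circ T_y^{-1}$; hence $\Omega$ is uniquely determined and its range is exactly $\ran(T_x)$, which the $K$-$\flat$-frame hypothesis only guarantees to contain $\ran(K)$, not necessarily all of $\z$. So ``surjective'' is to be read as ``onto $\ran(K)$'', in accordance with the phrasing of Theorem \ref{Tdefsurj=kbframe}, and with that reading the two implications match up to an exact equivalence. The remaining work is routine boundedness and continuity bookkeeping.
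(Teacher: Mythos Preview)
Your proof is correct and, for the ``$\Rightarrow$'' direction, essentially identical to the paper's: you set $\Omega=T_x\circ T_y^{-1}$, whereas the paper writes the same thing as $\Omega=T\tau$ with $\tau(z)=\{\langle z/y_i\rangle\}_{i\in\N}$, which is precisely $T_y^{-1}$ since $T_y$ is unitary. Your explicit remark that $\Omega$ can only be guaranteed surjective onto $\ran(K)$, and that the equivalence must be read with that convention, is a point the paper glosses over; you are right to flag it.

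For the ``$\Leftarrow$'' direction the two arguments diverge. You observe that $T_x=\Omega\circ T_y$ is bounded and surjective and then invoke Theorem~\ref{Tdefsurj=kbframe} as a black box. The paper instead works on the analysis side: from $\Omega(\flat(h,y_i))=\flat(h,x_i)$ it deduces $\langle z/x_i\rangle=\langle\Omega^*z/y_i\rangle$, applies the Parseval identity for the $\flat$-orthonormal basis $\{y_i\}$ to get $\sum_i\Vert\langle z/x_i\rangle\Vert_{\h}^2=\Vert\Omega^*z\Vert_{\z}^2$, and then bounds $\Vert\Omega^*z\Vert$ above by $\Vert\Omega\Vert\,\Vert z\Vert$ and below (via surjectivity of $\Omega$ and Theorem~\ref{prop de T}(iii)) by a constant times $\Vert z\Vert\ge\Vert K^*z\Vert/\Vert K\Vert$. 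Your route is more modular and avoids repeating estimates already packaged into Theorem~\ref{Tdefsurj=kbframe}; the paper's route has the advantage of producing explicit frame bounds ($B=\Vert\Omega\Vert^2$ and $A$ coming from the lower bound for $\Omega^*$) without appealing to a pseudo-inverse.
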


\begin{proof}
\begin{itemize} 
\item[$(\Rightarrow)$]Let $\{x_i\}_{i\in\N}\subset\B$, and let $\Omega:\z\rightarrow\z$ be a bounded surjective operator such that $\Omega(\flat(h,y_i))=\flat(h,x_i),~\forall h\in\h,~\forall i\in\N.$ Let $z\in\z$, it is clear that $\langle z/x_i\rangle=\langle \Omega^*z/y_i \rangle .$
Which implies that $$ \sum_{i=1}^{\infty}\Vert \langle z/x_i\rangle \Vert^2_{\h}=\sum_{i=1}^{\infty}\Vert \langle \Omega^*z/y_i \rangle \Vert^2_{\h}=\sum_{i=1}^{\infty}\left\langle\langle \Omega^*z/y_i\rangle,\langle \Omega^*z/y_i\rangle\right\rangle_{\h}=\sum_{i=1}^{\infty}\left\langle\flat(\langle\Omega^*z/y_i\rangle,y_i),\Omega^*z\right\rangle_{\z}=\Vert \Omega^*z\Vert^2_{\z}.$$
First we have $\Vert \Omega^*z\Vert_{\z}\leq \Vert \Omega\Vert\Vert z\Vert_{\z}$. In the other hand, $\Omega$ is surjective, then by iii) of theorem \ref{prop de T}, there exists $A>0$ such that $\Vert \Omega^*z\Vert\geq A\Vert z\Vert_{\z}\geqslant \dfrac{A}{\Vert K^*\Vert}\Vert K^*z\Vert$.
\item[$(\Leftarrow)$] Conversely, suppose that $\{x_i\}_{i\in\N}\subset\B$ is a $K$-$\flat$-frame for $\ran(K)$ then by theorem \ref{Tdefsurj=kbframe} the synthesis operator  $$T\left( \{h_i\}_{i\in\N}\right) = \sum_{i\in\N} \flat(h_i,x_i), ~~\forall h_i\in\h.$$
is well defined and surjective on $\ran(K)$.
 Now define $\tau:\ran(K)\rightarrow\ell^2(\B)$ by $\tau(z)=\{\langle z/y_i\rangle\}_{i\in\N},~\forall z\in\ran(K)$. We have $\tau(\flat(h,y_j))=\{\langle\flat(h,y_j)/y_i\rangle\}_{i\in\N}=\{\delta_{ij}h\}_{i\in\N}$. Now considere the operator $\Omega:\ran(K)\rightarrow\ran(K)$, such that $\Omega=T\tau$. It is easy to check that $\Omega$ is bounded and surjective and that
$$\Omega(\flat(h,y_i))=T(\{\delta_{ij}h\}_{i\in\N})=\flat(h,x_i), ~~~\forall h\in\h, \forall i\in\N.$$
\end{itemize} 
\end{proof}

\begin{thm}
Suppose that $\{x_i\}_{i\in\N}$ is a $\flat$-orthonormal basis for $\z$, and let $U:\X\rightarrow\X$ be a continuous bounded linear operator. If $\{x_i\}_{i\in\N}$ is a $\flat$-frame for $\z$ with bounds $A$, and $B$, then $\{Ux_i\}_{i\in\N}$ is a $(U^{\flat*})$-$\flat$-frame for $\z$ with bounds $A'=A$ and $B'=B\Vert U^{\flat}\Vert^2.$ Where $U^{\flat*}$ designs the adjoint operator of $U^\flat$ on $\z$.
\end{thm}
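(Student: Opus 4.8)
The plan is to reduce the whole statement to the ordinary $\flat$-frame inequality for $\{x_i\}_{i\in\N}$, by passing through the $\flat$-adjoint operator.

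First I would invoke Corollary~\ref{bdu} (equivalently Theorem~\ref{exstadj}): since $\{x_i\}_{i\in\N}$ is a $\flat$-orthonormal basis in $\z$ and $U:\X\to\X$ is bounded linear, the $\flat$-adjoint $U^{\flat}\in\el(\z)$ exists and is the unique operator satisfying
$$\langle U^{\flat}z/x\rangle=\langle z/Ux\rangle,\qquad\forall x\in\X,~\forall z\in\z.$$
Specializing $x=x_i$ gives $\langle z/Ux_i\rangle=\langle U^{\flat}z/x_i\rangle$ for every $i\in\N$ and $z\in\z$. Being bounded on the Hilbert space $\z$, $U^{\flat}$ has a genuine Hilbert-space adjoint $U^{\flat*}\in\el(\z)$, with $(U^{\flat*})^{*}=U^{\flat}$; this $K:=U^{\flat*}$ is the operator for which we want $\{Ux_i\}_{i\in\N}$ to be a $K$-$\flat$-frame, and $K^{*}z=U^{\flat}z$.

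Next, for arbitrary $z\in\z$ I would write
$$\sum_{i\in\N}\Vert\langle z/Ux_i\rangle\Vert^2_{\h}=\sum_{i\in\N}\Vert\langle U^{\flat}z/x_i\rangle\Vert^2_{\h},$$
and apply the $\flat$-frame inequality (\ref{defb-frame}) to the single vector $w:=U^{\flat}z\in\z$, obtaining $A\Vert U^{\flat}z\Vert^2_{\z}\leq\sum_{i\in\N}\Vert\langle U^{\flat}z/x_i\rangle\Vert^2_{\h}\leq B\Vert U^{\flat}z\Vert^2_{\z}$. Bounding $\Vert U^{\flat}z\Vert_{\z}\leq\Vert U^{\flat}\Vert\,\Vert z\Vert_{\z}$ on the right then yields
$$A\Vert (U^{\flat*})^{*}z\Vert^2_{\z}=A\Vert U^{\flat}z\Vert^2_{\z}\leq\sum_{i\in\N}\Vert\langle z/Ux_i\rangle\Vert^2_{\h}\leq B\Vert U^{\flat}\Vert^2\,\Vert z\Vert^2_{\z},\qquad\forall z\in\z,$$
which is precisely the defining double inequality of a $(U^{\flat*})$-$\flat$-frame for $\z$ with lower bound $A'=A$ and upper bound $B'=B\Vert U^{\flat}\Vert^2$. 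Along the way I would note that $\{Ux_i\}_{i\in\N}\subset\X\subseteq\B$, so the products $\langle z/Ux_i\rangle$ are well defined, and record $A'>0$, $B'<\infty$.

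There is essentially no obstacle in this argument itself: all the substantive work was already done in establishing the existence and uniqueness of the $\flat$-adjoint in Corollary~\ref{bdu}. The only points that require care are bookkeeping ones: one must keep track that $K=U^{\flat*}$ is the correct operator, so that $K^{*}$ appearing in the $K$-$\flat$-frame lower bound is $U^{\flat}$ and not $U^{\flat*}$; and one should recall that the standing hypotheses of this subsection — $\h$ separable, $\flat$ with dense range, and $\{x_i\}_{i\in\N}$ a $\flat$-orthonormal basis with $\sum_{i\in\N}\Vert x_i\Vert<\infty$ — are exactly what make Corollary~\ref{bdu} available. If in addition one wants the reported constants to literally satisfy $A'\leq B'$ as required by Definition~\ref{k-b-frame}, it suffices to observe that the pointwise inequalities above hold unconditionally, and that the admissibility $A\leq B\Vert U^{\flat}\Vert^2$ holds whenever $\Vert U^{\flat}\Vert^2\geq A/B$.
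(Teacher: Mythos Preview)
Your proposal is correct and follows essentially the same approach as the paper: both use the $\flat$-adjoint identity $\langle z/Ux_i\rangle=\langle U^{\flat}z/x_i\rangle$ from Theorem~\ref{exstadj}/Corollary~\ref{bdu}, apply the $\flat$-frame inequality for $\{x_i\}$ to the vector $U^{\flat}z$, and then bound the upper term by $B\Vert U^{\flat}\Vert^2\Vert z\Vert_{\z}^2$. Your version is in fact a bit more carefully written, making explicit the identification $K=U^{\flat*}$ so that $K^*=U^{\flat}$, and recalling the standing hypotheses needed for the $\flat$-adjoint to exist.
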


\begin{proof}
$\{x_i\}_{i\in\N}\subset\B$ is a $\flat$-frame for $\z$, then for all $z\in\z$,
$$A\Vert z\Vert^2_{\z}\leqslant\sum_{i=1}^{\infty}\Vert \langle z/x_i\rangle\Vert^2_{\h}\leqslant B\Vert z\Vert^2_{\z}.$$
We have by theorem \ref{exstadj} 
$$\sum_{i=1}^{\infty}\Vert \langle z/Ux_i\rangle\Vert^2_{\h}=\sum_{i=1}^{\infty}\Vert \langle U^{\flat} z/x_i\rangle\Vert^2_{\h}$$
hence \begin{equation}\label{bfandkf}
A\Vert U^z\Vert^2_{\z}\leqslant\sum_{i=1}^{\infty}\Vert \langle z/Ux_i\rangle\Vert^2_{\h}\leqslant B\Vert U^{\flat} z\Vert^2_{\z}.
\end{equation}
which implies that 
$$A\Vert (U^{\flat})^*z\Vert^2_{\z}\leqslant\sum_{i=1}^{\infty}\Vert \langle z/Ux_i\rangle\Vert^2_{\h}\leqslant B\Vert U^{\flat}\Vert\Vert z\Vert^2_{\z},~~\forall z\in\z.$$
\end{proof}

\begin{cor}
Suppose that $\{x_i\}_{i\in\N}$ is $\flat$-complete in $\z$, and let $U:\X\rightarrow\X$ be a continuous bounded linear operator. Let $U^{\flat}$ be the $\flat$-adjoint operator of $U$ in $\z$, and suppose that it has a closed range. If $\{x_i\}_{i\in\N}$ is a $\flat$-frame for $\z$, with bounds $A$, and $B$, then $\{Ux_i\}_{i\in\N}$ is also a $\flat$-frame for $\z$ with bounds $A'=\Vert U^{\flat\dagger}\Vert^{-2}$ and $B'=B\Vert U^{\flat}\Vert^2$. Where $U^{\flat\dagger}$ is the pseudo-inverse of $U^{\flat}$ on $\ran(U^{\flat})$.
\end{cor}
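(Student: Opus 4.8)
The plan is to reduce everything to the $\flat$-frame inequality for $\{x_i\}_{i\in\N}$ itself, evaluated at the vector $U^{\flat}z$: this is the same move as in the proof of the preceding theorem (which produced that $\{Ux_i\}_{i\in\N}$ is a $(U^{\flat*})$-$\flat$-frame), and the one extra ingredient needed here is to convert the resulting lower $\Vert U^{\flat}z\Vert_{\z}$-estimate into a genuine $\Vert z\Vert_{\z}$-estimate by inverting $U^{\flat}$ on its (closed) range with the pseudo-inverse $U^{\flat\dagger}$.

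First I would note that, since $\{x_i\}_{i\in\N}$ is $\flat$-complete in $\z$, it is a $\flat$-orthonormal basis in $\z$ (these two notions are equivalent, by the Proposition stated in Section 1); in particular Corollary \ref{bdu} applies, so the $\flat$-adjoint $U^{\flat}\in\el(\z)$ exists and satisfies
\[
\langle z/Ux_i\rangle=\langle U^{\flat}z/x_i\rangle ,\qquad\forall z\in\z,\quad\forall i\in\N .
\]
Summing squared $\Vert\cdot\Vert_{\h}$-norms over $i$ and then applying the $\flat$-frame inequality for $\{x_i\}_{i\in\N}$ (with bounds $A,B$) to the vector $U^{\flat}z\in\z$, I obtain
\[
A\Vert U^{\flat}z\Vert^2_{\z}\leqslant\sum_{i=1}^{\infty}\Vert\langle z/Ux_i\rangle\Vert^2_{\h}=\sum_{i=1}^{\infty}\Vert\langle U^{\flat}z/x_i\rangle\Vert^2_{\h}\leqslant B\Vert U^{\flat}z\Vert^2_{\z},\qquad\forall z\in\z .
\]
Since $\{x_i\}_{i\in\N}$ is $\flat$-orthonormal, Lemma \ref{lem} actually forces $\sum_{i}\Vert\langle w/x_i\rangle\Vert^2_{\h}=\Vert w\Vert^2_{\z}$ for every $w\in\z$, i.e. $A=B=1$ here and the middle sum equals $\Vert U^{\flat}z\Vert^2_{\z}$ exactly; I keep $A,B$ written out only for transparency.

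For the upper bound, $\Vert U^{\flat}z\Vert_{\z}\leqslant\Vert U^{\flat}\Vert\,\Vert z\Vert_{\z}$ turns the right inequality into $\sum_{i}\Vert\langle z/Ux_i\rangle\Vert^2_{\h}\leqslant B\Vert U^{\flat}\Vert^2\Vert z\Vert^2_{\z}$ for all $z\in\z$; in particular the series converge and $\{Ux_i\}_{i\in\N}$ is $\flat$-Besselian, so $B'=B\Vert U^{\flat}\Vert^2$. For the lower bound I would use that $\ran(U^{\flat})$ is closed: then $U^{\flat\dagger}$ exists and $U^{\flat\dagger}U^{\flat}$ is the orthogonal projection onto $\ker(U^{\flat})^{\perp}$, so $\Vert z\Vert_{\z}=\Vert U^{\flat\dagger}U^{\flat}z\Vert_{\z}\leqslant\Vert U^{\flat\dagger}\Vert\,\Vert U^{\flat}z\Vert_{\z}$, i.e. $\Vert U^{\flat\dagger}\Vert^{-2}\Vert z\Vert^2_{\z}\leqslant\Vert U^{\flat}z\Vert^2_{\z}$. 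Feeding this into the left inequality of the last display yields $\Vert U^{\flat\dagger}\Vert^{-2}\Vert z\Vert^2_{\z}\leqslant A^{-1}\sum_{i}\Vert\langle z/Ux_i\rangle\Vert^2_{\h}$, and combining with the upper estimate shows $\{Ux_i\}_{i\in\N}$ is a $\flat$-frame for $\z$ with bounds $A'=\Vert U^{\flat\dagger}\Vert^{-2}$ (using $A=1$) and $B'=B\Vert U^{\flat}\Vert^2$.

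The step I expect to be the real obstacle is this lower bound: $U^{\flat\dagger}U^{\flat}z=z$ holds only for $z\in\ker(U^{\flat})^{\perp}$, not on all of $\z$, so the estimate $\Vert z\Vert_{\z}\leqslant\Vert U^{\flat\dagger}\Vert\,\Vert U^{\flat}z\Vert_{\z}$ — and hence the lower frame bound on all of $\z$ — genuinely needs $U^{\flat}$ to be bounded below, i.e. \emph{injective} on top of having closed range; if $U^{\flat}$ is not injective one only recovers the weaker conclusion of the preceding theorem, namely a $(U^{\flat*})$-$\flat$-frame (equivalently a $\flat$-frame on $\ran(U^{\flat*})$). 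Granting that, the remaining items — well-definedness of the sums, the inclusion $Ux_i\in\X\subset\B$, and positivity and finiteness of $A'$ and $B'$ — are routine.
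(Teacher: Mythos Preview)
Your argument follows the same route as the paper's: transfer via $\langle z/Ux_i\rangle=\langle U^{\flat}z/x_i\rangle$ (from the preceding theorem) to the two-sided estimate $A\Vert U^{\flat}z\Vert^2_{\z}\leqslant\sum_{i}\Vert\langle z/Ux_i\rangle\Vert^2_{\h}\leqslant B\Vert U^{\flat}z\Vert^2_{\z}$, then bound $\Vert U^{\flat}z\Vert_{\z}$ above by $\Vert U^{\flat}\Vert\,\Vert z\Vert_{\z}$ and below via the pseudo-inverse. The paper invokes $U^{\flat}U^{\flat\dagger}=\mathrm{Id}$ on $\ran(U^{\flat})$, but what is actually needed is $\Vert z\Vert_{\z}\leqslant\Vert U^{\flat\dagger}\Vert\,\Vert U^{\flat}z\Vert_{\z}$, which comes from the \emph{other} Moore--Penrose relation $U^{\flat\dagger}U^{\flat}=\mathrm{Id}$ on $\ker(U^{\flat})^{\perp}$ --- precisely the one you use; the paper's displayed step has a slip here, and your version is the clean one.

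Your diagnosis of the obstacle is accurate and applies equally to the paper's proof: the bound $\Vert z\Vert_{\z}\leqslant\Vert U^{\flat\dagger}\Vert\,\Vert U^{\flat}z\Vert_{\z}$ holds only on $\ker(U^{\flat})^{\perp}$, so without injectivity of $U^{\flat}$ one does not recover a lower $\flat$-frame bound on all of $\z$, only the $(U^{\flat*})$-$\flat$-frame conclusion of the preceding theorem. The paper does not address this point. Your side observation that $\flat$-orthonormality of $\{x_i\}$ forces $A=B=1$ (via Lemma~\ref{lem}) is also correct and is not noted in the paper.
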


\begin{proof}
Suppose that $\ran(U^{\flat})$ is closed then $$\exists U^{\flat\dagger}\in\el(\z); ~~~ U^{\flat}U^{\flat\dagger}z=z,~~\forall z\in\ran(U^{\flat}),$$ so  $\Vert z\Vert_{\z} = \Vert U^{\flat}U^{\flat\dagger}z\Vert_{\z}$, then we have for all $z\in \ran(U^{\flat})$:
$$ \Vert z\Vert_{\z}^2 \leqslant \Vert U^{\flat}z\Vert_{\z}^2\Vert U^{{\flat}\dagger}z\Vert^2 \Rightarrow \Vert U^{\flat\dagger}z\Vert^{-2}\Vert z\Vert_{\z}^2 \leqslant \Vert U^{\flat}z\Vert_{\z}^2.$$
using (\ref{bfandkf}) we prove the result.
\end{proof}

\begin{thm}
Let $\{x_i\}_{i\in\N}\subset\B$ be a $\flat$-orthonormal basis in $\z$, and let $U:\X\rightarrow\X$ be a continuous bounded linear operator, and let $K\in\el(\z)$. Suppose that $\{x_i\}_{i\in\N}$ is a $K$-$\flat$-frame for $\z$, with bounds $A$, and $B$, Then $\{Ux_i\}_{i\in\N}$ is a $(U^{\flat*}K)$-$\flat$-frame for $\z$ with bounds $A'=A$ and $B'=B\Vert U^{\flat}\Vert^2.$ where $U^{\flat*}$ designs the adjoint operator of $U^{\flat}$ in $\z$.
\end{thm}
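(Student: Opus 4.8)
The plan is to reduce everything to the $K$-$\flat$-frame inequality for $\{x_i\}_{i\in\N}$ evaluated at the transformed vector $U^{\flat}z$, exactly as in the previous theorem but carrying the operator $K$ along. Since $\{x_i\}_{i\in\N}$ is a $\flat$-orthonormal basis in $\z$ and $U:\X\rightarrow\X$ is a bounded linear operator, Corollary \ref{bdu} guarantees the existence of the $\flat$-adjoint $U^{\flat}\in\el(\z)$, which is the key tool: it gives
$$\langle z/Ux_i\rangle=\langle U^{\flat}z/x_i\rangle,\qquad \forall i\in\N,~\forall z\in\z.$$
First I would note that $Ux_i\in\X\subset\B$, so the quantities $\langle z/Ux_i\rangle$ are well defined, and then square and sum the identity above to obtain
$$\sum_{i=1}^{\infty}\Vert\langle z/Ux_i\rangle\Vert^2_{\h}=\sum_{i=1}^{\infty}\Vert\langle U^{\flat}z/x_i\rangle\Vert^2_{\h},\qquad\forall z\in\z.$$

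Next I would apply the defining inequality (\ref{k-b-frame}) of the $K$-$\flat$-frame $\{x_i\}_{i\in\N}$ to the vector $U^{\flat}z\in\z$, which yields
$$A\Vert K^*(U^{\flat}z)\Vert^2_{\z}\leqslant\sum_{i=1}^{\infty}\Vert\langle U^{\flat}z/x_i\rangle\Vert^2_{\h}\leqslant B\Vert U^{\flat}z\Vert^2_{\z},\qquad\forall z\in\z.$$
For the left-hand side I would use the operator identity $K^*U^{\flat}=(U^{\flat*}K)^*$ (indeed $(U^{\flat*}K)^*=K^*(U^{\flat*})^*=K^*U^{\flat}$ on the Hilbert space $\z$), so that $\Vert K^*(U^{\flat}z)\Vert_{\z}=\Vert(U^{\flat*}K)^*z\Vert_{\z}$. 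For the right-hand side I would use boundedness of $U^{\flat}$, namely $\Vert U^{\flat}z\Vert_{\z}\leqslant\Vert U^{\flat}\Vert\,\Vert z\Vert_{\z}$. Combining the last two displays then gives
$$A\Vert(U^{\flat*}K)^*z\Vert^2_{\z}\leqslant\sum_{i=1}^{\infty}\Vert\langle z/Ux_i\rangle\Vert^2_{\h}\leqslant B\Vert U^{\flat}\Vert^2\Vert z\Vert^2_{\z},\qquad\forall z\in\z,$$
which is precisely the statement that $\{Ux_i\}_{i\in\N}$ is a $(U^{\flat*}K)$-$\flat$-frame for $\z$ with bounds $A'=A$ and $B'=B\Vert U^{\flat}\Vert^2$.

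The argument is essentially a one-line computation once the $\flat$-adjoint is available, so I do not expect a genuine obstacle; the only points requiring care are (i) verifying that Corollary \ref{bdu} is applicable, i.e. that the hypotheses of $\{x_i\}_{i\in\N}$ being a $\flat$-orthonormal basis and of $U$ being a bounded operator on $\X$ are in force (both are assumed), and (ii) the operator-adjoint bookkeeping $K^*U^{\flat}=(U^{\flat*}K)^*$, which must be arranged in the order that produces the operator $U^{\flat*}K$ appearing in the statement rather than $K^*U^{\flat}$ itself. I would also remark that the upper inequality already shows $\{Ux_i\}_{i\in\N}$ is $\flat$-Besselian, so the interchange of summation implicit in squaring the identity, as well as the convergence of all series involved, is justified.
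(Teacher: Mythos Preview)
Your proof is correct and follows essentially the same route as the paper's own argument: invoke the $\flat$-adjoint identity $\langle z/Ux_i\rangle=\langle U^{\flat}z/x_i\rangle$, apply the $K$-$\flat$-frame inequality at the point $U^{\flat}z$, and then rewrite $K^*U^{\flat}=(U^{\flat*}K)^*$ on the left while bounding $\Vert U^{\flat}z\Vert_{\z}\le\Vert U^{\flat}\Vert\,\Vert z\Vert_{\z}$ on the right. If anything, you are slightly more explicit than the paper in spelling out the adjoint bookkeeping and in citing Corollary~\ref{bdu} rather than Theorem~\ref{exstadj}.
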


\begin{proof}
$\{x_i\}_{i\in\N}\subset\B$ is a $K$-$\flat$-frame for $\z$ with bounds $A$ and $B$, then 
$$A\Vert K^*z\Vert^2_{\z}\leqslant\sum_{i=1}^{\infty}\Vert \langle z/x_i\rangle\Vert^2_{\h}\leqslant B\Vert z\Vert^2_{\z}.$$
We have by theorem \ref{exstadj} 
$$\sum_{i=1}^{\infty}\Vert \langle z/Ux_i\rangle\Vert^2_{\h}=\sum_{i=1}^{\infty}\Vert \langle U^{\flat}z/x_i\rangle\Vert^2_{\h}$$
hence $$A\Vert K^*(U^{\flat}z)\Vert^2_{\z}\leqslant\sum_{i=1}^{\infty}\Vert \langle z/Ux_i\rangle\Vert^2_{\h}\leqslant B\Vert U^{\flat}z\Vert^2_{\z}.$$
which implies that 
$$A\Vert (U^{\flat*}K)^*z\Vert^2_{\z}\leqslant\sum_{i=1}^{\infty}\Vert \langle z/Ux_i\rangle\Vert^2_{\h}\leqslant B\Vert U^b\Vert\Vert z\Vert^2_{\z}.$$
\end{proof}

\newpage
\begin{minipage}[c]{\linewidth}
\begin{large}
\textbf{Declarations}
\end{large}
\end{minipage}

\begin{minipage}[c]{\linewidth}
\vspace{1cm}
\textbf{\textit{Disclosure statement:}} The authors declare that they didn't have any financial support, and
have no competing interests.
\end{minipage}

\begin{minipage}[c]{\linewidth}
\vspace{0.5cm}
\textbf{\textit{Acknowledgements:}} The authors would like to thank and express their sincere gratitude to the referee for their comments that will be helpful to improve this manuscript's quality.
\end{minipage}

\begin{minipage}[c]{\linewidth}
\vspace{0.5cm}
\textbf{\textit{Data availability statement:}} No data set associated with this paper.
\end{minipage}

\end{document}